\def\R{\mathbb{R}}
\def\Q{\mathbb{Q}}
\def\A{\mathbb{A}}
\def\Z{\mathbb{Z}}
\def\C{\mathbb{C}}
\def\O{\mathcal{O}}
\def\H{\mathcal{H}}
\def\TT{\boldsymbol{T}}
\def\GG{\boldsymbol{G}}
\def\PGL{\mathrm{PGL}}
\def\PSL{\mathrm{PSL}}
\def\PSO{\mathrm{PSO}}
\def\KK{\boldsymbol{K}}
\def\p{\mathfrak{p}}
\def\A{\mathbb{A}}
\def\P{\mathbb{P}}
\def\KK{\boldsymbol{K}}
\newtheorem{theorem}{Theorem}[section]
\newtheorem*{theorem*}{Lema}
\newtheorem{definition}{Definition}
\newtheorem{lemma}[theorem]{Lemma}
\newtheorem{proposition}[theorem]{Proposition}
\theoremstyle{remark}
\newtheorem{remark}{Remark}
\numberwithin{equation}{section}
\newenvironment{customtheorem}[1]{%
  
  \theorem
}{\endtheorem}
\newcommand\rquot[2]{
  \mathchoice
  {% \displaystyle
    \text{\raise0.5ex\hbox{$#1$}\big/\lower0.5ex\hbox{$#2$}}%
  }
  {% \textstyle
    #1\,/\,#2
  }
  {% \scriptstyle
    #1\,/\,#2
  }
  {% \scriptscriptstyle
    #1\,/\,#2
  }
}
\newcommand\lrquot[3]{
  \mathchoice
  {% \displaystyle
    \text{\lower0.5ex\hbox{$#1$}\big\backslash\raise0.5ex\hbox{$#2$\!}\big/
      \lower0.5ex\hbox{\!\!$#3$}}%
  }
  {% \textstyle
    #1\,\backslash\,#2\,/\,#3
  }
  {% \scriptstyle
    #1\,\backslash\,#2\,/\,#3
  }
  {% \scriptscriptstyle
    #1\,\backslash\,#2\,/\,#3
  }
}
\newcommand\lquot[2]{
  \mathchoice
  {% \displaystyle
    \text{\lower0.5ex\hbox{$#1$}\big\backslash\raise0.5ex\hbox{$#2$}}%
  }
  {% \textstyle
    #1\,\backslash\,#2
  }
  {% \scriptstyle
    #1\,\backslash\,#2
  }
  {% \scriptscriptstyle
    #1\,\backslash\,#2
  }
}
\DeclareFontFamily{U}{wncy}{}
\DeclareFontShape{U}{wncy}{m}{n}{<->wncyr10}{}
\DeclareSymbolFont{mcy}{U}{wncy}{m}{n}
\DeclareMathSymbol{\Sh}{\mathord}{mcy}{"58}
\DeclareFontFamily{U} {cmmi}{}
\DeclareFontShape{U}{cmmi}{m}{n}{
  <-6> cmmi5
  <6-7> cmmi6
  <7-8> cmmi7
  <8-9> cmmi8
  <9-10> cmmi9
  <10-12> cmmi10
  <12-> cmmi12}{}
\DeclareSymbolFont{Xcmmi} {U} {cmmi}{m}{n}
\DeclareMathSymbol{\xi}{\mathord}{Xcmmi}{24}
\title{Equidistribution of Stark-Heegner and ATR and cycles}
\author{Patricio Pérez-Piña}
\address{Pontificia Universidad Católica de Chile}
\email{perezpinha.patricio@gmail.com}
\date{\today}
\begin{document}
\begin{abstract}
We prove the equidistribution of some cycles of $S$-arithmetic nature that are related to RM points and Stark-Heegner points. We also prove the equidistribution of Picard orbits of ATR cycles as defined by Darmon, Rotger and Zhao.
\end{abstract}

\maketitle

\section{Introduction}

We denote by $Y$ the modular curve of level $1$ over $\Q$. Let $\H$ be the complex upper half plane and recall that $\PSL_2(\Z)$ acts on $\H$ by M\"obius transformations. The quotient space is identified with the complex points of the modular curve, in other words \begin{equation}\label{modularcurve}\PSL_2(\Z)\backslash\H\cong Y(\C).\end{equation}

Let $K/\Q$ be a quadratic extension and denote the discriminant (resp. class number) of $K$ by $d_K$ (resp. $h_K$). The ring of integers of $K$ is denoted by $\O_K$. Consider $\psi\colon K\to M_2(\Q)$ an algebra embedding such that $\psi(K)\cap M_2(\Z)=\psi(\O_K)$. 

When $K$ is imaginary, $\psi(K^\times)$ has a unique fixed point in $\H$ that we denote by $z_\psi$. Define $\Omega_K$ as the collection of $\PSL_2(\Z)$-orbits in \[\left\{z_\psi\in\H\mid \psi\colon K\to M_2(\Q) \mbox{ and }\psi(K)\cap M_2(\Z)=\psi(\O_K)\right\}.\] Under \eqref{modularcurve}, $\Omega_K$ is identified with the $h_K$ elliptic curves in $Y(\C)$ with CM by $\O_K$. Moreover, from the theory of complex multiplication, it is known that $\Omega_K$ is a Galois orbit.

If $K$ is real, $\psi(K^\times)$ has two fixed points in $\R$ and they determine the endpoints of a unique geodesic in $\H$ that we denote by $\mathcal{Y}_{\psi}$. In this case, $\Omega_K$ will denote the collection of $\PSL_2(\Z)$-orbits in \[\{\mathcal{Y}_{\psi}\subseteq\H\mid \psi\colon K\to M_2(\Q) \mbox{ and }\psi(K)\cap M_2(\Z)=\psi(\O_K)\}.\] In this case, under \eqref{modularcurve}, $\Omega_K$ is identified with a collection of $h_K$ closed geodesics in $Y(\C)$. Moreover, $\Omega_K$ is an orbit of the class group of $K$ (see \cite{ELMV2}).

Let $\mu_{hyp}$ be the hyperbolic probability measure on $\PSL_2(\Z)\backslash\H$ and $ds=\mathrm{Im}(z)^{-1}dz$ the hyperbolic metric. 

\begin{theorem}[Theorem 1 in \cite{D}] As $|d_K|\to\infty$, the collection $\Omega_K$ becomes equidistruted on $\PSL_2(\Z)\backslash\H$. In other words, for every compactly supported continuous function $f\colon \PSL_2(\Z)\backslash\H\to\C$:
\begin{enumerate}
	\item In the imaginary case, \[\frac{1}{h_K}\sum_{z_\psi\in \Omega_K}f(z_\psi)\to\int fd\mu_{hyp} \mbox{ as } d_K\to-\infty.\]
	\item In the real case, \[\frac{\sum_{\mathcal{Y_\psi}\in \Omega_K}\int_{\mathcal{Y}_\psi}f(s)ds}{\sum_{\mathcal{Y}_\psi\in \Omega_K}\mathrm{length}(\mathcal{Y}_\psi)}\to\int fd\mu_{hyp} \mbox{ as } d_K\to\infty.\]
\end{enumerate}
\end{theorem}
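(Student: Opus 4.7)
The plan is to use the spectral (Weyl) criterion: by the spectral decomposition of $L^2(\PSL_2(\Z)\backslash\H)$, the space $C_c(\PSL_2(\Z)\backslash\H)$ is approximable (in a sense sufficient for equidistribution) by linear combinations of the constant function, Hecke--Maass cusp forms $\phi$, and unitary Eisenstein series $E(\cdot,1/2+it)$. The constant function contributes the total mass and is trivial; so it suffices to show that the ``Weyl sums''
\begin{equation*}
W_K(\phi) \;=\; \frac{1}{h_K}\sum_{z_\psi\in\Omega_K} \phi(z_\psi) \qquad\text{or}\qquad W_K(\phi)\;=\;\frac{\sum_{\mathcal{Y}_\psi\in\Omega_K}\int_{\mathcal{Y}_\psi}\phi(s)\,ds}{\sum_{\mathcal{Y}_\psi}\mathrm{length}(\mathcal{Y}_\psi)}
\end{equation*}
tend to $0$ as $|d_K|\to\infty$, uniformly enough (in the Laplacian eigenvalue and $t$) to conclude by truncating the spectral expansion.

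The next step is to recognize $W_K(\phi)$ as an arithmetic invariant. Using a Waldspurger/Katok--Sarnak type identity, the square $|W_K(\phi)|^2$ is proportional, up to tame factors, to the central value $L(1/2,\phi)\cdot L(1/2,\phi\otimes\chi_K)/|d_K|^{1/2}$, where $\chi_K$ is the quadratic character associated to $K$. Equivalently, and this is the route I would follow, one invokes the Shimura correspondence: there exists a half-integral weight Maass form $\tilde\phi$ of level $4$ whose $|d_K|$-th Fourier coefficient is (up to an explicit factor of $|d_K|^{1/4}$) equal to $\sqrt{h_K}\cdot W_K(\phi)$ in the imaginary case, and an analogous identity involving the period integral of $\phi$ over $\mathcal{Y}_\psi$ in the real case. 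The same strategy, applied to Eisenstein series, produces Dirichlet $L$-values $L(1/2+it,\chi_K)$ for which the classical Burgess subconvex bound already suffices.

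The heart of the proof, and the main obstacle, is to establish a non-trivial (i.e.\ subconvex) bound for the relevant coefficient / $L$-value in the discriminant aspect. Concretely, one needs
\begin{equation*}
|a_{|d_K|}(\tilde\phi)|\;\ll_\phi\;|d_K|^{1/4-\delta}
\end{equation*}
for some absolute $\delta>0$, together with sufficient polynomial control in the spectral parameter of $\phi$. I would follow Iwaniec's method: amplify the $|d_K|$-th coefficient by a short Dirichlet polynomial in Hecke eigenvalues, apply a Petersson/Kuznetsov-type formula at half-integral weight, and estimate the resulting sum using the Weil bound for Sali\'e (imaginary case) or Kloosterman (real case) sums; the extra square-root cancellation from Sali\'e sums is what makes the imaginary case somewhat cleaner, while the real case additionally requires Proskurin's formula and a careful treatment of the continuous spectrum to control the off-diagonal contribution.

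Finally, I would combine these bounds: substituting them into the spectral expansion of any fixed $f\in C_c(\PSL_2(\Z)\backslash\H)$, the Weyl sums over the cuspidal spectrum are $O(|d_K|^{-\delta'})$ after a standard Sobolev truncation in the eigenvalue, the Eisenstein contribution is $O(|d_K|^{-\delta''})$ by Burgess, and only the constant term survives, yielding $\int f\,d\mu_{hyp}$. The limiting statements in both the imaginary and the real case then follow.
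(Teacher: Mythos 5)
This statement is cited in the paper as Theorem~1 of~\cite{D} and the paper offers no proof of its own; it is background. Your sketch is, in outline, precisely the argument of Duke's original paper (building on Iwaniec's earlier work for holomorphic forms): Weyl's equidistribution criterion, identification of the Weyl sums with Fourier coefficients of half-integral weight Maass forms via the Shimura/Maass--Shintani correspondence (or equivalently, via Waldspurger/Katok--Sarnak, with central $L$-values), Burgess for the Eisenstein contribution, and a nontrivial bound for the half-integral weight coefficients obtained by amplification, the half-integral weight Kuznetsov/Proskurin formula, and the explicit evaluation/Weil bound for the resulting exponential sums. One small inaccuracy worth flagging: in the half-integral weight Kuznetsov formula the arithmetic sums that arise are Sali\'e-type in \emph{both} the positive- and negative-discriminant cases (that is the structural reason Iwaniec's method wins a power saving); the real-quadratic case is not governed by ordinary Kloosterman sums. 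The real distinction between the two cases is rather which sign of Fourier index one reads off and the additional technicalities in the period/Shintani lift for closed geodesics. Modulo that, your outline correctly reproduces the proof in the cited reference.
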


In this work we establish two generalizations of Duke's Theorem that we proceed to explain. Let $p$ be a prime number and let $\tau$ be a real quadratic irrationality over $\Q$. We say that $\tau$ is a \emph{real multiplication} (RM) point if $\tau$ generates a field inert or ramified at $p$. Consider $E/\Q$ an elliptic curve with multiplicative reduction at $p$. In \cite{Darmonintegration}, the author provides a conjectural construction of algebraic points on $E$ using RM points. The process of doing this relies on the concept of $p$-adic integration with respect to a so-called mock Hilbert modular form. The points arising from this construction are known as Stark-Heegner points and they are indexed by cycles that we call Stark-Heegner cycles. Such collection of points are expected to be Galois orbits, see Conjectures 5.3 and 5.6 in \cite{Darmonintegration}. On the other hand, in \cite{Darmonvonk} the authors propose a construction, relying on the notion of rigid meromorphic cocycles for the Ihara group $\PSL_2(\Z[1/p])$ and using RM points, which is expected to provide an explicit class field theory for real quadratic fields, see Conjecture 1 in \cite{Darmonvonk}.

Now we describe what we call Stark-Heegner cycles. Let $\mathscr{H}$ be the Drinfeld's upper half plane as in \cite{DasTei}. It is the rigid analytic space whose $L$-points on a complete field extension $L/\Q_p$ are given by $\mathscr{H}(L)=\P^1(L)\smallsetminus\P^1(\Q_p)$. Let $\Q_{p^2}$ denote the unique quadratic unramified extension of $\Q_p$. We set the notations $\H_p=\mathscr{H}(\C_p)$ and $\H_{p^2}^{unr}=\mathscr{H}(\Q_{p^2})$. The diagonal action of the Ihara group $\PSL_2(\Z[1/p])$ on $\H\times\H_p$ by M\"{o}bius transformations is discrete. The quotient space \[\lquot{\PSL_2(\Z[1/p])}{(\H\times\H_p)}\] may be referred to as a mock Hilbert modular surface (see \cite{darmonmock}). This space can naturally be split in two open sets. Indeed, there exists a $\PGL_2(\Q_p)$-equivariant map from $\H_p$ to the set of vertices and edges of the Bruhat-Tits tree of $\PGL_2(\Q_p)$ known as the reduction map. Using this, one can write $\H_p$ as a disjoint union of two $\PSL_2(\Q_p)$-invariants sets $\H_p^{even}$ and  $\H_p^{odd}$, where $\H_p^{even}$ (resp. $\H_p^{odd}$) denote the set of points in $\H_p$ that are reduced to the set of even (resp. odd) vertices or edges. Therefore, the mock Hilbert modular surface equals \[\lquot{\PSL_2(\Z[1/p])}{(\H\times\H_p^{even})}\sqcup\lquot{\PSL_2(\Z[1/p])}{(\H\times\H_p^{odd})}.\] The space $\lquot{\PSL_2(\Z[1/p])}{(\H\times\H_p^{even})}$ can also be expressed as the quotient space $\lquot{\PGL^+_2(\Z[1/p])}{(\H\times\H_p)}$. In this text, we will prefer the latter description.

Let $K/\Q$ be a real quadratic extension inert at $p$. Consider an algebra embedding $\psi\colon K\to M_2(\Q)$. Then, there is an action of $K^\times$ on both $\H$ and $\H_p$ induced by the map $\psi\colon K^\times\to \mathrm{GL}_2(\Q)$. Since $K$ is real, $K^\times$ fixes two conjugate real quadratic irrationalities on the boundary $\P^1(\R)$ of $\H$. We denote by $\tau_\psi$ the fixed point for which $\psi(x)$ acts on the column $(\tau\,\,\,\, 1)^t$ as multiplication by $x$ for every $x$ in $K$. Note that this element is actually in $K$ so it defines an RM point. On the other hand, since $K$ is inert at $p$, the group $K^\times$ fixes two points in $\H_p$ that in fact must also be $\tau_\psi$ and its Galois conjugate $\tau'_\psi$. Let $\mathcal{Y}_\psi$ be the unique geodesic in $\H$ connecting $\tau_\psi$ and $\tau'_\psi$.

\begin{definition} The Stark-Heegner cycle attached to $\psi$ is the image of $\mathcal{Y}_\psi\times\{\tau_\psi\}$ in $\PGL^+_2(\Z[1/p])\backslash (\H\times\H_p)$. We denote it by $\Delta_\psi$.
\end{definition}

Let $\O$ be an order in $K$. We say that the Stark-Heegner cycle $\Delta_\psi$ has conductor $\O[1/p]$ if $\psi(K)\cap M_2(\Z[1/p])=\psi(\O[1/p])$. The narrow class group $Pic^+(\O[1/p])$ parametrizes the Stark-Heegner cycles of conductor $\O[1/p]$. We denote by $disc_p(\O)$ the prime-to-$p$ part of the discriminant of $\O$.

Observe that $\tau_\psi$ lies on $K\cap \H_p\subseteq \H_{p^2}^{unr}$ and so the cycle $\Delta_\psi$ is confined inside the closed subspace $\PGL^+_2(\Z[1/p])\backslash(\H\times\H_{p^2}^{unr})$ of the mock Hilbert modular surface. Each $\Delta_\psi$ is compact and comes equipped with a unique probability measure $\nu_\psi$ invariant under the geodesic flow on its $\H$-coordinate. Denote by $\mu_{\O[1/p]}$ the unique probability measure on $\PGL^+_2(\Z[1/p])\backslash(\H\times\H_{p^2}^{unr})$ proportional to $\sum\nu_\psi$, where the sum runs trough those $\psi$ for which $\Delta_\psi$ has conductor $\O[1/p]$. Denote by $\mu$ the unique probability measure on $\PGL^+_2(\Z[1/p])\backslash(\H\times\H_{p^2}^{unr})$ coming from a Haar measure on $\PGL^+_2(\R)\times\PGL_2(\Q_p)$ and a counting measure on $\PGL^+_2(\Z[1/p])$.

\begin{customtheorem}{A}[]\label{equidistributionSH} The Stark-Heegner cycles of conductor $\O[1/p]$, when ordered by $disc_p(\O)$, become equidistributed on the space $\PGL^+_2(\Z[1/p])\backslash(\H\times\H_{p^2}^{unr})$ with respect to $\mu$. In other words, for every $f\colon \PGL^+_2(\Z[1/p])\backslash(\H\times\H_{p^2}^{unr})\to\C $ continuos and compactly supported function, we have that \[\lim_{disc_p(\O)\to\infty}\int_{\PGL^+_2(\Z[1/p])\backslash(\H\times\H_{p^2}^{unr})}fd\mu_{\O[1/p]}=\int_{\PGL^+_2(\Z[1/p])\backslash(\H\times\H_{p^2}^{unr})}fd\mu.\]
\end{customtheorem}

The natural projection of the cycles $\Delta_\psi$ to $\PSL_2(\Z[1/p])\backslash\H_{p}$ is the Picard orbit of an RM point $\tau_\psi$ as studied in \cite{Darmonvonk}. Therefore, one can conceive Theorem \ref{equidistributionSH} as a distribution statement about RM points. Note that a traditional equidistribution statement of RM points on $\PSL_2(\Z[1/p])\backslash\H_{p}$ would not make sense since $\PSL_2(\Z[1/p])$ acts with dense orbits on $\H_{p^2}^{unr}$. A similar issue appears when considering a $p$-adic analogue of Duke's theorem on the equidistribution of geodesics (see \cite{paper1}).

Now we proceed to define the ATR cycles introduced by Darmon, Roter and Zhao and state our main result about them. Let $K/F$ be a quadratic extension. Write $[F\colon\Q]=r+1$ with $r\geq0$ and enumerate the embeddings $F\to\R$ as $\sigma_0,...,\sigma_{r}$. For $0\leq j\leq r$, denote by $K_{\sigma_j}$ the algebra $K\otimes_F\R$ with $\R$ viewed as an $F$-algebra through the embedding $\sigma_j$.

\begin{definition} The extension $K/F$ is called an almost totally real (ATR) extension if
\begin{enumerate}[font=\normalfont] 
\item $K_{\sigma_0}\cong\C$.
\item $K_{\sigma_j}\cong\R\times\R$ for $1\leq j\leq r$.
\end{enumerate}
\end{definition}

Let $E$ be an elliptic curve defined over a totally real field $F$ of narrow class number $h_F^+=1$. In chapter 8 of \cite{Dbook}, the author describes a process of integration of the Hilbert modular form attached to $E$ using ATR extensions. Conjecturally, this leads to the construction of algebraic points on $E$. In the special case when $F=\Q$ this recovers the classical construction of Heegner points on $E$ (e.g., as in \cite{GZ} section V.2). When $E$ has everywhere good reduction and $F$ is a quadratic field, \cite{DL} provides numerical evidence for these conjectures. The construction involves computing the image under a complex Abel-Jacobi map of some non-algebraic cycles, called ATR cycles, lying inside the Hilbert modular variety attached to $F$. 

Denote by $\Z_F$ the ring of integers of $F$. For $0\leq j\leq r$, define $\mathcal{H}_j=\mathcal{H}$ the upper half plane and $\mathcal{H}^{r+1}=\prod_{j}\mathcal{H}_j$, which is endowed with an action of $\PGL_2(\R)^{n+1}$ given coordinate-wise. The natural embedding $(\sigma_0,...,\sigma_r)\colon M_2(F)\to M_2(\R)^{r+1}$ induces an embedding of $\PGL_2(F)$ inside $\PGL_2(\R)^{r+1}$, thus inducing an action of $\PSL_2(\Z_F)$ on $\H^{r+1}$. The action is properly discontinuous and the quotient $\lquot{\PSL_2(\Z_F)}{\H^{r+1}}$ is the space of complex points of a Hilbert modular variety that parametrizes abelian varieties of dimension $r+1$ with real multiplication by $\Z_F$.

Let $\psi\colon K\to M_2(F)$ be an $F$-algebra embedding from an ATR extension $K/F$. Use $\sigma_0$ to view $K\subseteq \C$. Then $\sigma_j\circ \psi$ induces an action of $K^\times$ on $\H_j$.  Condition (1) implies that $K^\times$ fixes a unique point $\tau_0\in\H_0$ and (2) implies for $1\leq j\leq r$ the existence of a unique geodesic $\mathcal{Y}_j\subseteq\H_j$ preserved by $K^\times$. The geodesic connects the two fixed points of $K^\times$ in the boundary $\mathbb{P}^1(\R)$ of $\mathcal{H}_j$ ($1\leq j\leq r$). Observe that since $\psi(K^\times)$ is defined over $F$, we have that $\tau_0\in K$ and the fixed points at $\partial\H_j$ correspond to $\sigma_j(\tau_0)$ and its Galois conjugate. 

\begin{definition}[See \cite{DRZ}] The ATR cycle attached to $\psi$ is the image of the set $\{\tau_0\}\times\prod_{j=1}^r\mathcal{Y}_j$ in $\lquot{\PSL_2(\Z_F)}{\H^{r+1}}$ . We denote it by $\Delta_\psi$.
\end{definition}

Let $\O$ be an order in $K$. We say that the ATR cycle $\Delta_\psi$ has conductor $\O$ if $\psi(K)\cap M_2(\Z_F)=\psi(\O)$. Let $Pic^+(\O)$ be the narrow class group of $\O$. The number of ATR cycles of conductor $\O$ is exactly $h^+(\O)=|Pic^+(\O)|$ (see \ref{ATRcyclespar}). It is expected that the collection of $\Delta_\psi$ with a given conductor gives rise to a Galois orbit in $E$, see Conjecture 2.2 un \cite{DRZ}. 

Denote by $A$ the diagonal group of $\PGL_2$. Each cycle $\Delta_\psi$ is compact and comes equipped with a unique $A(\R)^r$-right invariant probability measure $\nu_\psi$. We denote by $\mu_\O$ the unique probability measure proportional to $\sum_{}\nu_{\psi}$. We see $\mu_\O$ as a probability measure on $\PSL_2(\Z_F)\backslash \H^{r+1}$ supported over the collection of ATR cycles of conductor $\O$. Denote by $\mu$ the unique probability measure on $\PSL_2(\Z_F)\backslash \H^{r+1}$ coming from a Haar measure on $\PGL^+_2(\R)^{r+1}$ and a counting measure in $\PSL_2(\Z_F)$. Denote by $\mathfrak{d}_{\O/\Z_F}$ the discriminant $\Z_F$-ideal of $\O$.

\begin{customtheorem}{B}[]\label{equidistributionATR} The ATR cycles of conductor $\O$, when ordered by the absolute value of $N_{F/\Q}(\mathfrak{d}_{\O/\Z_F})$, become equidistributed on the Hilbert modular variety $\PSL_2(\Z_F)\backslash \H^{r+1}$. In other words, for every continuos and compactly supported function \break $f\colon \PSL_2(\Z_F)\backslash \H^{r+1}\to\C $, we have \[\lim_{|N_{F/\Q}(\mathfrak{d}_{\O/\Z_F})|\to\infty}\int_{\PSL_2(\Z_F)\backslash \H^{r+1}}fd\mu_{(\O)}=\int_{\PSL_2(\Z_F)\backslash \H^{r+1}}fd\mu.\]
\end{customtheorem}

It is worth mentioning that the assumption $h_F^+=1$ is not essential but it makes the exposition clearer. The construction of ATR cycles in a more general case can be found in \cite{gartner}.

\subsection{Outline} The main tool for our purposes is Theorem 4.6 in \cite{ELMV3}. In consequence, our strategy is to describe the previous objects and ambient spaces in an $S$-arithmetic and adelic context. In section \ref{toralsets} we review the language and tools necessary to prove Theorem \ref{equidistributionSH} and \ref{equidistributionATR}. We start by recalling the definition of a homogeneous toral set and its discriminant. Later, we state a general result on the equidistribution of homogeneous toral sets in quotients of adelic homogeneous spaces (Theorem \ref{equidistribution}). The main reference for this is \cite{ELMV3}. To parametrize both Stark-Heegner and ATR cycles, we will use optimal embeddings. These are introduced in section \ref{embeddings}. In section \ref{ATR} we focus on its application to ATR cycles. We explain the parametrization by the group $Pic^+(\O)$ and how the ATR cycles are realized as projection of homogeneous toral sets. In this fashion, Theorem \ref{equidistributionATR} is obtained as a consequence of the previous sections. Lastly, in a similar way, section \ref{StarkH} shows how to apply section \ref{equidistribution} to prove Theorem \ref{equidistributionSH}.

\section*{Acknowledgments}

I am very grateful to my advisor Ricardo Menares for his support and careful reading of this manuscript. I also would like to thank Manuel Luethi for very useful conversations and references. Finally, I thank Philippe Michel and the Analytic Number Theory group at EPFL. I was very lucky to enjoy their hospitality during the academic period 2022-2023. This work was supported by ANID Doctorado Nacional No 21200911 and by the Bourse d’excellence de la Confédération suisse No. 2022.0414.

\section{Equidistribution of Homogeneous toral sets}\label{toralsets}

\subsection*{Notation} Let $\p$ be a finite prime of $F$. We use $F_\p$ and $\Z_{F,\p}$ to denote the completion with respect to the $\p$-adic topology of $F$ and $\Z_F$ respectively. If $R$ is an $\Z_F$-algebra, $R_\p$ denotes $R\otimes \Z_{F,\p}$. Let $S$ be a finite set of primes containing the infinite ones. We use $F_S$ to denote the algebra $\prod_{\p\in S}F_\p$ (including the infinite places). We use $\A_F$ for the ring of adeles over $F$. The algebra $\A^S$ is defined by the decomposition $\A_F=\A_F^S\times F_S$. When $S_\infty$ is the set of infinite places, we use the notation $\A_{F,f}$ to denote $\A_F^{S_\infty}$. Write $\widehat{\Z_F}$ to denote the closure of $\Z_F$ in $\A_{F,f}$. We use $R_S$, $R_\infty$ and $\widehat{R}$ to denote $R\otimes F_S$, $R_{S_\infty} $ and $R\otimes\widehat{\Z_F}$ respectively. The symbol $R_+$ stands for the totally positive elements in $R$. When dealing with subgroups of the general linear group, the superscript $+$ denotes elements of determinant totally positive.
\subsection{Orders}\label{orders}

Let $F$ be a totally real field with narrow class number $h_F^+=1$. In particular $F$ has trivial class number and $\Z_F$ is a principal ideal domain. Let $K/F$ be a quadratic extension. Then $\O_K$ is a free $\Z_F$-module of rank $2$ and the quotient $\O_K/\Z_F$ is torsion-free. This shows that $\O_K$ admit an $\Z_F$-basis containing $1$ and therefore $\O_K$ is monogenic. Fix $\omega_K\in\O_K$ such that $\O_K=\Z_F[\omega_K]$.

A $\Z_F$-order in $\O_K$ is completely determined by its index in $\O_K$. Indeed if $\O$ is such an order and $[\O_K\colon \O]=f\geq1$, then $\Z_F+f\O_K=\Z_F[f\omega_K]\subseteq \O$ and $\Z_F[f\omega_K]$ has also index $f$ so we have equality $\O=\Z_F[f\omega_K]$. We refer to $f$ as the conductor of $\O$ and we use the notation $\O_f\colonequals \Z_F[f\omega_K]$, the unique $\Z_F$-order of conductor $f$ in $\O_K$. The discriminant ideal $\mathfrak{d}_{\O_f/\Z_F}$ is the principal $\Z_F$-ideal generated by $f^2(\omega_K-\omega_K')^2$ with $\omega_K'$ the conjugate over $K$ of $\omega_K$. 

For $\O$ an order, denote by $\O^0$ its elements of null trace. If $x$ belongs to $\O$, we have $x=\frac{\mathrm{tr}(x)}{2}+\left(x-\frac{\mathrm{tr}(x)}{2}\right)$ and from this we observe that any $x\in\O$ can be written as $x=\frac{a+y}{2}$ with $a\in\Z_F$ and $y\in\O^0$. One can easily check that this decomposition of $x$ is unique. Write $\omega_K=\frac{t+\omega_{K,0}}{2}$ with $t\in \Z_F$ and $\omega_{K,0}\in\O_K^0$. Set $d_K\colonequals \omega_{K,0}^2\in \Z_F$, then the discriminant ideal $\mathfrak{d}_{\O_f/\Z_F}=f^2d_K\Z_F$. Observe that $K=F(\sqrt{d_K})$.

\subsection{Homogeneous toral sets}

Let $B$ be a quaternion algebra over $F$ and let $\GG$ be the algebraic group $PB^\times$ over $F$. We denote by $[\GG]$ the locally compact space $\lquot{\GG(F)}{\GG(\A_F)}$.

The Lie algebra $\mathfrak{g}$ of $\GG$ is identified with $B^0$, the space of quaternion of null trace. If $F[\varepsilon]$ denotes the dual numbers over $F$ i.e. $\varepsilon^2=0$, the previous claim can be seen from the exact sequence \[0\to B/F\to\GG(F[\varepsilon])\to\GG(F)\to1,\] where the first map is the one induced by $x\mapsto 1+x\varepsilon$. Note that the inclusion $B^0\subseteq B$ induces an isomorphism $B^0\cong B/F$.

Let $K/F$ be a quadratic extension and $\psi\colon K\to B$ an $F$-algebra embedding. Let $\TT_\psi$ be the image induced by $\psi$ of the algebraic torus $\TT_K\colonequals \mathrm{res}_{K/F}\mathbb{G}_m/\mathbb{G}_m$ in $\GG$. An homogeneous toral set in $[\GG]$ is the image of $\TT_\psi g$ in $[\GG]$ for some $\psi$ as before and $g\in \GG(\A_F)$. We denote this image by $[\TT_\psi g]$. More generally, we use $[\cdot]$ to denote the projection $\GG(\A_F)\to[\GG]$. Likewise, if $\KK$ is a compact subset of $\GG(\A_F)$, we use $[\GG(\A_F)]_{\KK}$ to denote the quotient space $[\GG]_{\KK}$ and $[\cdot]_{\KK}$ denotes the projection $[\GG]\to[\GG]_{\KK}$.

\subsection{The Discriminant.}

Take $\Lambda$ a lattice in $V\colonequals \mathfrak{g}\otimes\mathfrak{g}$. The group $\GG(F)=B^\times/F^\times$ acts on $V$ by the adjoint representation which under the isomorphism $V\cong B^0\otimes B^0$, can be seen as $g\cdot(a\otimes b)=gag^{-1}\otimes gbg^{-1}$ for any $g\in\GG(F)$ and $a,b\in B^0$. For $\p$ a prime in $\Z_F$, let $||\cdot||_\p$ be the norm in $V$ such that $\Lambda\otimes\O_\p$ is its unit ball.

Let $Y=[\TT_\psi g]$ be a homogeneous toral set. The Lie algebra $\mathfrak{t}$ of $\TT_\psi$ is identified with $\psi(K)\cap B^0=\psi(K^0)$. Let $w$ be an $F$-generator of $\mathfrak{t}$ and consider $\iota(\mathfrak{t})\colonequals\frac{w\otimes w}{C(w,w)}\in V$, where $C$ denotes the Cartan-Killing form on $\mathfrak{g}$. Under $\mathfrak{g}\cong B^0$, for $a\in B^0$ we have $C(a,a)=-4\mathrm{nr}(a)$.

\begin{definition}
The (finite part of the) discriminant of $Y$ (relative to $\Lambda$) is \[D_{\Lambda,f}\colonequals\prod_{\p}||g_\p^{-1}\cdot\iota(\mathfrak{t}) ||_\p.\]
\end{definition}

\begin{remark} There is also a component at infinity of the discriminant but we will make no use of it. This is because we will work with sequences of discriminants with constant discriminant at infinity. See \cite{ELMV3}.
\end{remark}

Now let $R$ be an $\Z_F$-order in $B$ and consider $R^T\colonequals (\Z_F+2R)\cap B^0$. It is a lattice in $B^0$. For $x$ in $\O$, write $x=\frac{a+y}{2}$ with $a\in\Z_F$ and $y\in\O^0$. Note that $\psi(x)\in R$ if and only if $\psi(y)\in R^T$.

Let $A$ be a commutative ring. Remember that an element $x$ in an $A$-module $M$ is primitive if $x\in a M$ with $a\in A$ implies $a\in A^\times$. %An element in an $\Z_F$-module $M$ is said to be $\p$-primitive if it is primitive when seen as an element in the $\O_{F,\p}$-module $M\otimes_{\Z_F}\O_{F,\p}$. Observe that $\p$-primitive for all prime $\p$ implies primitve for the original $\Z_F$-module $M$.

\begin{lemma}\label{opt} The following are equivalent:
\begin{enumerate}[font=\normalfont]
\item The embedding $\psi\colon K\to B$ is optimal with respect to $\Z_F$ i.e. $\psi(K)\cap R=\psi(\Z_F)$.
\item The element $\psi(f\omega_K)$ is primitive in $R$.
\item The element $\psi(f\omega_{K,0})$ is primitive in $R^T$.
\end{enumerate}
\end{lemma}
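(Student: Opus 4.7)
The plan is to establish the chain (1) $\Leftrightarrow$ (2) $\Leftrightarrow$ (3), interpreting the optimality condition in (1) as $\psi(K)\cap R=\psi(\O_f)$ where $\O_f=\Z_F[f\omega_K]$ is the order of conductor $f$ in $\O_K$.

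For (1) $\Leftrightarrow$ (2), I would exploit the $\Z_F$-basis $\{1,f\omega_K\}$ of $\O_f$. In the forward direction, optimality gives $\psi(f\omega_K)\in R$; if $\psi(f\omega_K)=a\cdot s$ with $a\in\Z_F$ and $s\in R$, then $s\in\psi(K)\cap R=\psi(\O_f)$, so $s=\psi(c+d\,f\omega_K)$, and matching coefficients via the $F$-linear independence of $1$ and $f\omega_K$ forces $ac=0$ and $ad=1$, hence $a\in\Z_F^\times$. In the reverse direction, I would use that $\Z_F$ is a PID (since $h_F^+=1$): given $\alpha=c+d\omega_K\in\O_K$ with $\psi(\alpha)\in R$, the ideal $I=\{d'\in\Z_F:\psi(d'\omega_K)\in R\}\supseteq (f)$ is principal, say $I=(a)$ with $a\mid f$; writing $f=ab$, primitivity applied to $\psi(f\omega_K)=b\cdot\psi(a\omega_K)$ forces $b\in\Z_F^\times$, so $I=(f)$, yielding $f\mid d$ and hence $\alpha\in\O_f$.

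For (2) $\Leftrightarrow$ (3), the central identity is $\psi(f\omega_{K,0})=2\psi(f\omega_K)-ft$, equivalently $\psi(f\omega_K)=(ft+\psi(f\omega_{K,0}))/2$. I would first establish the membership equivalence $\psi(f\omega_K)\in R \Leftrightarrow \psi(f\omega_{K,0})\in R^T$: the forward direction is immediate from the identity plus tracelessness. For the reverse, I would write $\psi(f\omega_{K,0})=c+2r$ with $c\in\Z_F$, $r\in R$, and $c=-\mathrm{tr}(r)$ (forced by tracelessness), and verify $(ft+c)/2\in\Z_F$ by computing $\psi(f\omega_{K,0})^2=c^2-4\mathrm{nr}(r)$ (using $r^2=\mathrm{tr}(r)r-\mathrm{nr}(r)$), equating to $f^2 d_K$, and combining with $t^2\equiv d_K\pmod{4\Z_F}$ (from $d_K=t^2-4\mathrm{nr}(\omega_K)$) to obtain $(ft-c)(ft+c)\in 4\Z_F$, from which a local valuation argument at primes $\p\mid 2$ concludes $ft-c\in 2\Z_F$. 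For the primitivity equivalence, I would proceed locally at each prime $\p$: at $\p\nmid 2$ the trace decomposition $R_\p=\Z_{F,\p}\oplus(R_\p\cap B^0)$ reduces non-primitivity of $\psi(f\omega_K)$ to the conjunction ``$\pi\mid ft$ and $\psi(f\omega_{K,0})\in\pi R^T_\p$''; combined with the fact that $\psi(f\omega_{K,0})\in\pi R^T_\p$ forces $\pi^2\mid f^2 d_K$ (via squaring) and the tame-ramification bound $v_\p(d_K)\leq 1$, one gets $\pi\mid f$ automatically, yielding the equivalence. At $\p\mid 2$, I would adapt the previous valuation argument with an extra factor of $\pi$.

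The main obstacle is the analysis at primes $\p$ above $2$, where $2$ is not a unit and the trace decomposition breaks down. Both the reverse direction of the membership equivalence and the primitivity equivalence at such primes rely on the factorization $(ft-c)(ft+c)\in 4\Z_F$ together with the observation that, if $v_\p(e)<v_\p(2)$ for $e=ft-c$, then $v_\p(e(2c+e))=2v_\p(e)<v_\p(4)$, contradicting the assumed divisibility; this forces $v_\p(e)\geq v_\p(2)$, i.e., $e\in 2\Z_F$, which is precisely the needed divisibility to upgrade the naive identity into an identity inside $R$.
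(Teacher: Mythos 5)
Your proof of (1) $\Leftrightarrow$ (2) is correct and close in spirit to the paper's, though the details differ: where the paper shows primitivity in $R$ is equivalent to primitivity in $\psi(K)\cap R$ (via $(aR)\cap\psi(K)=a(R\cap\psi(K))$) and then compares conductors $f,f'$, you match coefficients against the $\Z_F$-basis $\{1,f\omega_K\}$ of $\O_f$ and then use the principality of the ideal $I=\{d':\psi(d'\omega_K)\in R\}$. Both routes are fine; your reverse direction implicitly uses that $\psi(K)\cap R\subseteq\psi(\O_K)$ (an element of $R$ in $\psi(K)$ is integral over $\Z_F$), which is worth stating.

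For (2) $\Leftrightarrow$ (3) you diverge substantially from the paper. The paper's entire argument is the single computation $(\Z_F+2aR)^0=aR^T$ for all $a\in\Z_F$, applied with the identity $2\psi(f\omega_K)=ft+\psi(f\omega_{K,0})$; this turns the primitivity comparison into a one-line algebraic equivalence with no prime-by-prime analysis at all. You instead establish membership by a valuation argument and then primitivity locally, separating $\p\nmid 2$ (trace decomposition, squaring, tame-ramification bound $v_\p(d_K)\leq 1$) from $\p\mid 2$. Your $\p\nmid 2$ analysis is correct, if more laborious than necessary. Your membership valuation argument (showing $v_\p(ft-c)\geq v_\p(2)$ from $(ft-c)(ft+c)\in 4\Z_F$) is also correct.

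However, your treatment of primitivity at $\p\mid 2$ is a genuine gap. With $\psi(f\omega_{K,0})=\pi(c+2r)$, $c=-\mathrm{tr}(r)$, the relation you would derive by squaring is $(ft-\pi c)(ft+\pi c)=4\bigl(f^2\mathrm{nr}(\omega_K)-\pi^2\mathrm{nr}(r)\bigr)\in 4\Z_{F,\p}$, and your valuation trick applied to $u=ft+\pi c$ yields only $v_\p(u)\geq v_\p(2)$; setting $v_\p(u)=v_\p(2)$ produces no contradiction, since $v_\p(u-2\pi c)$ can also equal $v_\p(2)$. So ``adapt with an extra factor of $\pi$'' does not close the argument: you need $v_\p(u)\geq v_\p(2)+1$, and the divisibility relation by $4\Z_{F,\p}$ is simply not strong enough to force it. The cleanest repair is to reuse the structure you already set up in (1) $\Leftrightarrow$ (2): with $\O_{f'}=\psi^{-1}(\psi(K)\cap R)$ and $f'\mid f$, one has $\psi(K^0)\cap R^T=\psi(f'\omega_{K,0})\Z_F$, so both $\psi(f\omega_K)\in aR$ and $\psi(f\omega_{K,0})\in aR^T$ reduce to the same condition $f/f'\in a\Z_F$, uniformly at every prime. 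Alternatively, prove and invoke the paper's identity $(\Z_F+2aR)^0=aR^T$, which also sidesteps the case distinction $\p\mid 2$ versus $\p\nmid 2$ entirely.
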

\begin{proof} First we prove that (1) is equivalent to (2). Let $a\in \Z_F$ and assume that $\psi(f\omega_K)\in aR$. Then, $\psi(f\omega_K)\in (aR)\cap\psi(K)=a(R\cap\psi(K))$ since $a\psi(K)=\psi(K)$. Therefore $\psi(f\omega_K)$ is primitive in $R$ if and only if it is primitive in $\psi(K)\cap R$. Both (1) and (2) implies that $\psi(\O_f)\subseteq R$ so we will assume this contention. Now $\psi(\O_f)\subseteq \psi(K)\cap R$ so $\psi(K)\cap R$ is certainly an order $\psi(\O_{f'})\subseteq \psi(\O_K)$ containing $\psi(\O_f)$. Then $\psi(\O_f)=\psi(\O_{f'})$ if and only if $f=f'$ and this is equivalent to $\psi(f\omega_K)$ being primitive in $\psi(\O_{f'})$.

Now we prove that (2) is equivalent to (3). Let $a\in \Z_F$. Remember that $f\omega_K=\frac{ft+f\omega_{K,0}}{2}$. Now we have $\psi(f\omega_K)\in aR$ if and only if $\psi(f\omega_{K,0})\in (\Z_F+2aR)^0$. We conclude the proof if we show that $(\Z_F+2aR)^0=aR^T$. We have $R^T\subseteq (\Z_F+2aR)^0$ so it is enough to prove the opposite containment. Take $x\in(\Z_F+2aR)^0$ then $x=b+2ar$ with $b\in\Z_F$, $r\in R$ and $b+a\mathrm{tr}(r)=0$. But $\mathrm{tr}(r)\in\Z_F$ so $b\in a\Z_F$ and we have finished.
%We know that $\psi(c\beta)$ belongs to $R^T$ if given the optimality or tautologically if it is a primitive element as claimed. The optimality condition is equivalent to $\psi(K_\p)\cap R_\p=\psi(\O_\p)$ for every $\p$. So it is sufficient to prove that this local condition is equivalent to $\psi(c\beta)$ being $\p$-primitive. First assume that $\p\nmid 2$, since $2\O_K\subseteq\Z_F[\beta]$ we have that $\O_{K,\p}=\O_{F,\p}[\beta]$. Therefore $\O_\p=\O_{F,\p}+c\O_{K,\p}=\O_{F,\p}+c\beta\O_{F,\p}$. Observe now that $\psi(c\beta)$ belongs to $\psi(K_\p)\cap R^T_\p$ but $R^T_\p=R_\p^0$ so it actually belongs to the $\O_{F,\p}$-submodule $\psi(K_\p)\cap R_\p^0=\psi(c\beta\O_{F,\p})$ of $R^T_\p$. This module is free of rank $1$ and so $\p$-primitive means being a generator of it. This is evident.
\end{proof}

Since $g_\p$ belongs to $R_\p^\times/\O_{F,\p}^\times$ for all but finitely many primes $\p$, we have $g_{\p}R_{\p} g_{\p}^{-1}=R_\p$ almost everywhere. Therefore the intersección $B\cap\prod_{\p}g_\p R_\p g_\p^{-1}$ defines a global order in $B$ that we denoted as $R_Y$. Define $\O_Y$ to be the global order in $\psi(K)$ given by $\psi(K)\cap R_Y$.

Denote by $|\cdot|$ the usual absolute value in $\R$.

\begin{proposition}\label{discriminant} Let $\Lambda$ be the lattice $R^T\otimes R^T$ in $V=B^0\otimes B^0$ and $Y$ a homogeneous toral set in $[\GG]$. Then, up to a multiplicative constant (depending only on $F$), $D_{\Lambda,f}(Y)$ equals $|N_{F/\Q}(\mathfrak{d}_{\O_Y/\Z_F})|$.
\end{proposition}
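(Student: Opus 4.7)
The plan is to choose an explicit $F$-generator $w$ of $\mathfrak{t}=\psi(K^0)$ adapted to the order $\O_Y$, use Lemma \ref{opt} to ensure $g_\p^{-1}wg_\p$ is primitive in $R_\p^T$ at every finite $\p$, reduce each local factor of $D_{\Lambda,f}(Y)$ to the scalar $|C(w,w)|_\p^{-1}$, and finally collect the primes via the product formula.

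First I would write $\O_Y=\psi(\O_{f_Y})$ for some conductor $f_Y\in\Z_F$ (legitimate since $\O_Y$ is a $\Z_F$-order in $\psi(K)\cong K$), and set $w:=\psi(f_Y\omega_{K,0})$. This generates $\mathfrak{t}$ over $F$, since $\omega_{K,0}$ spans the one-dimensional $F$-space $K^0$. By the very definition of $\O_Y$, the embedding $\psi\colon K\to B$ is optimal with respect to $R_Y$, so Lemma \ref{opt}(3) applied to $R_Y$ yields that $w$ is primitive in $R_Y^T$, hence in $R_{Y,\p}^T$ at every finite $\p$. Conjugation by $g_\p$ preserves the center $F$ and the trace-zero subspace, so it commutes with the construction $R\mapsto R^T$; in particular $R_{Y,\p}^T=g_\p R_\p^T g_\p^{-1}$, and equivalently $g_\p^{-1}wg_\p$ is primitive in $R_\p^T$ for every finite $\p$.

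The next step is to compute the scalar. Using $\mathrm{nr}(y)=-y^2$ for trace-zero $y$ together with $\omega_{K,0}^2=d_K\in\Z_F$, one gets
\[
C(w,w)\;=\;-4\,\mathrm{nr}(w)\;=\;4f_Y^2 d_K.
\]
Because the adjoint action satisfies $g_\p^{-1}\cdot(w\otimes w)=(g_\p^{-1}wg_\p)\otimes(g_\p^{-1}wg_\p)$ and $C(w,w)\in F$, the local factor becomes
\[
\|g_\p^{-1}\cdot\iota(\mathfrak{t})\|_\p\;=\;|C(w,w)|_\p^{-1}\,\|(g_\p^{-1}wg_\p)^{\otimes 2}\|_\p.
\]
Since $\Z_{F,\p}$ is a DVR, $R_\p^T$ is a free $\Z_{F,\p}$-module of rank $3$, and any primitive vector extends to a basis; hence the self-tensor $(g_\p^{-1}wg_\p)^{\otimes 2}$ extends to a basis of $\Lambda_\p=R_\p^T\otimes R_\p^T$, is itself primitive, and therefore has norm exactly $1$.

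Finally I would multiply over all finite $\p$ and invoke the product formula $\prod_v|a|_v=1$ together with $\prod_{v\mid\infty}|a|_v=|N_{F/\Q}(a)|$, which holds since $F$ is totally real, to conclude
\[
D_{\Lambda,f}(Y)\;=\;\prod_{\p}|4f_Y^2 d_K|_\p^{-1}\;=\;|N_{F/\Q}(4f_Y^2 d_K)|\;=\;4^{[F:\Q]}\,|N_{F/\Q}(\mathfrak{d}_{\O_Y/\Z_F})|,
\]
using $\mathfrak{d}_{\O_Y/\Z_F}=f_Y^2 d_K\,\Z_F$ from the order computation in Section \ref{orders}. The constant $4^{[F:\Q]}$ depends only on $F$, giving the stated equality. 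The main potential obstacle is the tensor-product step: verifying uniformly at every finite prime that $\|(g_\p^{-1}wg_\p)^{\otimes 2}\|_\p=1$, which is handled cleanly by the DVR-basis argument above. Once this is granted, the rest of the argument reduces to the norm computation on $K$ and an application of the product formula.
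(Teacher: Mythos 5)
Your proof is essentially the same as the paper's: you choose $w=\psi(f_Y\omega_{K,0})$ as the $F$-generator of $\mathfrak{t}$, invoke Lemma~\ref{opt}(3) for the order $R_Y$ to get primitivity of $w$ in $R_Y^T$, transport this by $g_\p$-conjugation to $R_\p^T$, and then factor the local norms via $C(w,w)=-4\,\mathrm{nr}(w)=4f_Y^2 d_K$ and the product formula. The only difference is that you supply two small justifications the paper leaves implicit — that conjugation commutes with $R\mapsto R^T$, and the DVR argument that a primitive vector yields a primitive self-tensor — which is sound but not a genuinely different route.
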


\begin{proof} Write $\O_Y=\psi(\O_f)$. By Lemma \ref{opt} we have that $\psi(f\omega_{K,0})$ is primitive in $R_Y^T$. This is equivalent to $\psi(f\omega_{K,0})$ being primitive in the $\O_{F,\p}$-module $(R_Y^T)_\p$ for every prime $\p$. Conjugating by $g_\p^{-1}$ we have that $g_\p^{-1}\psi(f\omega_{K,0})g_\p$ is primitive in $(R^T)_\p$ for every prime $\p$. Now consider $w=\psi(f\omega_{K,0})$ as an $F$-generator of $\mathfrak{t}$ in the definition of $D_{\Lambda,f}$. Since $g_\p^{-1}\psi(f\omega_{K,0})g_\p$ is primitive in $(R^T)_\p$, the element $g_\p^{-1}\cdot(w\otimes w)$ is primitive in $\Lambda_\p$ and so by definition of $||\cdot||_\p$ we have that $||g_\p^{-1}\cdot(w\otimes w)||_\p=1$. Applying the definition of $D_{\Lambda,f}$ and using the product formula on $F$ we have \begin{align*}D_{\Lambda,f}(Y)&=\prod_\p {|-4\mathrm{nr}(w)|}_{\p}^{-1}\\&=\prod_{\sigma\colon F\to \R}|\sigma(-4\mathrm{nr}(w))|\\&=|N_{F/\Q}(-4\mathrm{nr}(w))|.\end{align*} But $\mathrm{nr}(w)=N_{K/F}(f\omega_{K,0})=-f^2d_K$ since $\omega_{K,0}\in\O_K^0$. We conclude that \[D_{\Lambda,f}(Y)=4^{[F\colon\Q]}|N_{F/\Q}(f^2d_K)|.\] On the other hand, the discriminant ideal $\mathfrak{d}_{\O_Y/\Z_F}$ is generated by $f^2d_K$ and this finishes the proof. \end{proof}

%For a field extension $L/M$ we denote by $\mathfrak{d}_{L/M}$ the relative ideal discriminant.. On the other hand, we have that $\mathfrak{d}_{K/F}$ is the principal ideal in $\Z_F$ generated by $d_{K/F}(\omega)=(\omega-\omega')^2=\beta^2$, $\mathfrak{d}_{K/\Q}$ is the principal ideal in $\Z$ generated by the discriminant $d_{K/\Q}=d_{\O_K/\Z}=(d_{\O_Y/\Z})/d^2$ and $\mathfrak{d}_{F/\Q}$ is a principal ideal in $\Z$, say generated by $n_F$. Hence, the formula $\mathfrak{d}_{K/\Q}=\mathfrak{d}_{F/\Q}^2N_{F/\Q}(\mathfrak{d}_{K/F})$ implies that $d_{\O_Y/\Z}=\pm d^2n_F^2N_{F/\Q}(\beta^2)$

%\begin{remark} Note that $N_{F/\Q}(\beta^2)$ is a constant depending only on $K$. Indeed $\beta$ was defined by the equation $\omega=\frac{a+\beta}{2}$ with $\O_K=\Z_F[\omega]$, $a\in\Z_F$ and $\beta\in\O_K^0$. Also note that this decomposition of $\omega$ is unique. If $\gamma$ is another generator of $\O_K$ over $\Z_F$ we must have $\gamma=u+v\omega=\frac{(2u+va)+(v\beta)}{2}$ with $u,v\in\Z_F$ and we can see that $2u+va\in\Z_F$ and $v\beta\in\O_K^0$. But since $\omega$ can also be written in terms of $\gamma$ we must have $v\in\Z_F^\times$. Therefore any other ``$\beta$'' must be of the form $v\beta$ with $v\in\Z_F^\times$. We have $N_{F/\Q}(u^2\beta^2)=N_{F/\Q}(\beta^2)$. This remark and the previous proposition shows that $D_{\Lambda,f}(Y)\to\infty$ if and only $N_{F/\Q}(\beta^2)\to\infty$ or the conductor of $\O_Y$ goes to infinity.
%\end{remark}

\subsection{Equidistribution}

%\begin{theorem} Let $\{Y_i\}$ be a sequence of homogeneous toral sets whose discriminant approaches $\infty$ with $i\to\infty$. Then any weak* limit of the measures $\mu_{Y_i}$ is a homogeneous probability measure on $[\GG]$, invariant under $\GG(\A_F)^+$.
%\end{theorem}
%\begin{proof} See Theorem 4.6 in \cite{ELMV3}.
%\end{proof}

Our hypothesis on the narrow class number of $F$ implies $\A_{F,f}^\times=F_+^\times\widehat{\Z_F}^\times$ and the exactness of the sequence \begin{equation*}\label{signs}1\to\Z_F^\times/\Z_{F,+}^\times\to\prod_{\sigma\colon F\to\R}\R^\times/\R_{>0}\to1.\end{equation*} Since $F$ is totally real this shows the equality $\Z_{F,+}^\times=(\Z_{F}^\times)^2$ since the squared units are totally positive and both subgroups have the same index in $\Z_F^\times$. From this, we have \[\lrquot{F^\times}{\A_F^\times}{(\A_F^\times)^2}\cong\lrquot{F_+^\times}{\A_{F,f}^\times}{(\A_{F,f}^\times)^2}\cong\lrquot{\Z_{F,+}^\times}{\widehat{\Z_{F}}^\times}{(\widehat{\Z_F}^\times)^2}\cong\rquot{\widehat{\Z_{F}}^\times}{(\widehat{\Z_F}^\times)^2}.\]

Let $\KK$ be a compact subgroup of $\GG(\A_F)$ and assume that $\mathrm{nrd}(\KK)$ has finite index in $F^\times\backslash\A_F^\times/(\A_F^\times)^2$. We define $S(\KK)$ to be the finite set of primes in $\Z_F$ for which the natural map \[\lrquot{F^\times}{\A_F^\times}{(\A_F^\times)^2\mathrm{nrd}(\KK)}\cong\prod_{\p\in S(\KK)}\Z_{F,\p}^\times/(\Z_{F,\p}^\times)^2\] is an isomorphism.

Denote by $\mu$ the unique probability measure in $[\GG]$ coming from a Haar measure in $\GG(\A_F)$ and a counting measure in $\GG(F)$.

\begin{theorem}\label{equidistribution} Let $\KK$ be a compact subgroup of $\GG(\A_F)$ and asssume that $\mathrm{nrd}(\KK)$ has finite index in $F^\times\backslash\A_F^\times/(\A_F^\times)^2$. Let $\{Y_i\}$ be a sequence of homogeneous toral sets whose discriminant approaches $\infty$ when $i\to\infty$ and whose associated quadratic fields $K_i$ do not ramify at primes $\p\in S(\KK)$. Then, the sequence of measures $({[\cdot]_{\KK}})_*(\mu_{Y_i})$ converges weak-* to  $({[\cdot]_{\KK}})_*(\mu)$ on $[\GG]_{\KK}$.
\end{theorem}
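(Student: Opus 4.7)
The plan is to deduce the theorem from Theorem 4.6 of \cite{ELMV3}, which gives equidistribution for the \emph{full packet} of homogeneous toral sets attached to a torus, namely the finite disjoint union $\mathrm{Pack}(Y_i) \colonequals \bigsqcup_t [\TT_{\psi_i} t g]$ as $t$ runs over representatives of $\TT_{\psi_i}(F)\backslash\TT_{\psi_i}(\A_F)$ modulo the stabiliser of $[\TT_{\psi_i} g]$. Since a single $Y_i$ is only one component of this packet, the content I need to extract is that after pushing forward to $[\GG]_\KK$ the packet collapses onto a \emph{uniformly bounded} (in $i$) number of copies of $[Y_i]_\KK$; then $({[\cdot]_{\KK}})_*\mu_{Y_i}$ will agree, up to a constant depending only on $\KK$, with the push-forward of the packet measure, and equidistribution will follow.

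First, I would identify which components of the packet become identified under $[\cdot]_\KK$. The reduced norm induces a map from the class group of $\TT_{\psi_i}$ into $F^\times\backslash\A_F^\times/(\A_F^\times)^2$, and right $\KK$-translation on $[\GG]$ corresponds to translation by $\mathrm{nrd}(\KK)$ on the image of this map. By definition of $S(\KK)$, the relevant quotient controlling the collapse is the finite group \[G(\KK) \colonequals \prod_{\p\in S(\KK)}\Z_{F,\p}^\times/(\Z_{F,\p}^\times)^2.\]

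Next, I would use the non-ramification hypothesis to show that $\mathrm{nrd}\colon \TT_{\psi_i}(\A_F)\to \A_F^\times$ surjects onto $G(\KK)$. At each $\p\in S(\KK)$ the quadratic étale algebra $K_i\otimes_F F_\p$ is either split or inert unramified; in both cases the image of the local norm $K_{i,\p}^\times\to F_\p^\times$ contains all of $\Z_{F,\p}^\times$, and in particular surjects onto $\Z_{F,\p}^\times/(\Z_{F,\p}^\times)^2$. Combined with the previous step, this forces the packet of $Y_i$ to split into exactly $|G(\KK)|$ classes in $[\GG]_\KK$, each the image of a single component $[\TT_{\psi_i} t g]$; consequently $({[\cdot]_{\KK}})_*\mu_{\mathrm{Pack}(Y_i)} = ({[\cdot]_{\KK}})_*\mu_{Y_i}$ as probability measures.

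The main obstacle is guaranteeing that this multiplicity count is uniform in $i$, which is exactly what the non-ramification at $S(\KK)$ buys: if some $K_i$ ramified at a prime of $S(\KK)$, the local norm image would shrink and the packet would split into strictly fewer classes, breaking the uniformity. Once this uniformity is established, Theorem 4.6 of \cite{ELMV3} applies to the sequence of packet measures — the discriminants being compatible via Proposition \ref{discriminant}, since $D_{\Lambda,f}(Y_i)$ agrees with $|N_{F/\Q}(\mathfrak{d}_{\O_{Y_i}/\Z_F})|$ up to a constant depending only on $F$ — and delivers the desired weak-$*$ convergence of $({[\cdot]_{\KK}})_*\mu_{Y_i}$ to $({[\cdot]_{\KK}})_*\mu$ on $[\GG]_\KK$.
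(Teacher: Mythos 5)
The proposal rests on a misreading of both the definition of a homogeneous toral set and of what Theorem~4.6 of \cite{ELMV3} actually gives, and as a result it has a real gap that the ``packet collapse'' argument cannot fill.

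First, the decomposition you write, $\mathrm{Pack}(Y_i)=\bigsqcup_t[\TT_{\psi_i}tg]$ over representatives $t$ of $\TT_{\psi_i}(F)\backslash\TT_{\psi_i}(\A_F)$, is vacuous: for any $t\in\TT_{\psi_i}(\A_F)$ one has $\TT_{\psi_i}(\A_F)\,t\,g=\TT_{\psi_i}(\A_F)\,g$, so every ``component'' $[\TT_{\psi_i}tg]$ is literally the same set $Y_i$. In the adelic formulation the homogeneous toral set $Y_i=[\TT_{\psi_i}g]$ already \emph{is} the full packet --- this is exactly what Proposition~\ref{propA} records when it shows the projection of a single $[\TT_{\psi_0}g]$ to $[\GG]_\KK$ yields all $h^+(\O)$ ATR cycles. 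There is therefore no ``bounded multiplicity'' count to perform, and no sense in which $Y_i$ is ``one component'' of a larger object that Theorem~4.6 would equidistribute.

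Second, and more importantly, Theorem~4.6 of \cite{ELMV3} does \emph{not} give full equidistribution of $\mu_{Y_i}$ on $[\GG]$; it only guarantees that any weak-$*$ limit $\mu_\infty$ is a homogeneous probability measure invariant under $\GG(\A_F)^+$, the image of $B^1(\A_F)$. Such a limit could a priori concentrate on a proper $\GG(\A_F)^+$-invariant subset of $[\GG]$, detected by a nontrivial quadratic character of the abelianization $\lrquot{F^\times}{\A_F^\times}{(\A_F^\times)^2}$ pulled back via $\mathrm{nrd}$. Upgrading from $\GG(\A_F)^+$-invariance to full invariance modulo $\KK$ is the entire content of the proof: one must show $\GG(F)\GG(\A_F)^+$ is closed, reduce (via the discussion in section~9 of \cite{Aka}) to functions on $\lrquot{\GG(F)}{\GG(\A_F)}{\GG(\A_F)^+\KK}$, and then handle the resulting finite abelian quotient by the Weyl criterion. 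Your proposal never invokes this reduction and simply asserts that Theorem~4.6 ``delivers the desired weak-$*$ convergence''; that is the gap.

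That said, one observation in your proposal is genuinely relevant and correctly identifies what the non-ramification hypothesis is for: at each $\p\in S(\KK)$, the local norm from the unramified étale algebra $K_{i,\p}$ surjects onto $\Z_{F,\p}^\times$, hence the image of $\mathrm{nrd}$ on $[\TT_{\psi_i}]$ surjects onto $\prod_{\p\in S(\KK)}\Z_{F,\p}^\times/(\Z_{F,\p}^\times)^2$. This is dual to what the paper does: for each nontrivial quadratic $\chi$ trivial on $\mathrm{nrd}(\KK)$, the paper finds a prime split in $K_i$ and inert in $K_\chi$ to produce $t_i\in\TT_{K_i}(\A_F)$ with $\chi(\mathrm{nrd}(\psi_i(t_i)))=-1$, forcing $\int_{Y_i}\chi\circ\mathrm{nrd}\,d\mu_{Y_i}=0$. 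Your surjectivity statement is equivalent to the paper's ``each such $\chi$ pulls back nontrivially,'' and with the correct framework in place it would finish the argument; but without the $\GG(\A_F)^+$-invariance from Theorem~4.6 and the reduction to the abelianization, it has nothing to latch onto.
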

\begin{proof} This is a refinement of Theorem 4.6 in \cite{ELMV3}. One wants to prove that every continuous compactly supported function $f$, invariant under right multiplication by $\KK$, we have \begin{equation}\label{test}\lim_{i\to\infty}\int_{[\GG]}fd\mu_{Y_i}=\int_{[\GG]}fd\mu.\end{equation} Let $\widetilde{\GG}$ denote the algebraic group $B^1$ of quaternions of norm $1$ and denote by  $\GG(\A_F)^+$ the image of the natural map $\widetilde{\GG}(\A_F)\to\GG(\A_F)$. The product $\GG(F)\GG(\A_F)^+$ is a closed subgroup. Indeed, $\GG(\A_F)^+$ is normal and Theorem 5.24 in \cite{AGNT} implies that $\lquot{\widetilde{\GG}(F)}{\widetilde{\GG}(\A_F)}$ has finite volume. The pushforward of this measure to $\lquot{\GG(F)}{\GG(F)\GG(\A_F)^+}$ is a finite $\GG(\A_F)^+$-invariant measure and then $\GG(F)\cap\GG(\A_F)^+$ is a lattice in $\GG(\A_F)^+$. Now Theorem 1.13 in \cite{Raghu} implies that $\GG(F)\GG(\A_F)^+$ is closed. Assume that $\mu_{Y_i}$ converges weak-* to a measure $\mu_{\infty}$. Theorem 4.6 in \cite{ELMV3} implies that $\mu_\infty$ is a homogeneous probability measure invariant under $\GG(\A_F)^+$. Since $\GG(F)\GG(\A_F)^+$ is closed, the discussion made in section 9 of \cite{Aka} applies and we can reduced to prove \eqref{test} for functions which are right $\GG(\A_F)^+\KK$-invariant. These are functions over the space \[\lrquot{\GG(F)}{\GG(\A_F)}{\GG(\A_F)^+\KK}.\] The reduced norm $\mathrm{nrd}$ induces an homeomorphism between $\lrquot{\GG(F)}{\GG(\A_F)}{\GG(\A_F)^+}$ and the compact abelian group $\lrquot{F^\times}{\A_F^\times}{(\A_F^\times)^2}$. By the Weyl criterion on characters, we are then reduced to work with $f$ being of the form $\chi\circ\mathrm{nrd}$, where $\chi$ is a non-trivial quadratic Hecke character over $F$ trivial over $\mathrm{nrd}(\KK)$. By Class Field Theory, to each such $\chi$ there's attached a quadratic extension $K_\chi/F$ that ramifies only at $\p\in S(\KK)$ (see section 1.1.1 in \cite{Hida}). Note that in this case, the RHS of $\eqref{test}$ is $0$ since we are integrating a character over the whole group. On the other hand, since $K_\chi$ can not be one of the $K_i$'s (comparing ramification), there exists some prime $\mathfrak{l}_i$ of $F$ inert in $K_{\chi}$ but split in $K_i$. Denote by $\psi_i\colon K_i\to B$ the embedding attached to $Y_i$ and let $t_i\in \A_{K_i}$ be the usual adelic element attached to a prime $\mathfrak{L}_i\mid \mathfrak{l}_i$ in $K_i$. Then $\chi(\mathrm{nrd}(\psi_i(t_i)))=\chi(N_{K_i/F}(\mathfrak{L_i}))=\chi(\mathfrak{l})=-1$. Now using the change of variables $t\mapsto tt_i$, we can show that the sequence in the LHS of \eqref{test} satifies \[\int_{Y_i}\chi(\mathrm{nrd}(t))d\mu_{Y_i}(t)=\chi(g_i)\int_{\TT_{K_i}}\chi(N_{K/F}(t))d\mu_{\TT_{K_i}}(t)=-\int_{Y_i}\chi(\mathrm{nrd}(t))d\mu_{Y_i}(t),\] finishing the proof. \end{proof}

\section{Optimal embeddings for the algebra of matrices}\label{embeddings}

We use the embedding $(\sigma_0,...,\sigma_r)\colon M_2(F)\to M_2(\R)^{r+1}$ to identify $\PGL_2(F_\infty)$ with $\PGL_2(\R)^{r+1}$. Let $\GG$ be the algebraic group $\mathrm{PGL} _2$ defined over $F$. Let $\O$ be an order in $K$. 

Let $S$ be a finite set of places of $F$ containing the infinite ones. We use $\Z_F[S^{-1}]$ to denote the localization of $\Z_F$ at the finite primes of $S$. If $\O$ is an order in $K$, we set $\O[S^{-1}]=\O\otimes_{\Z_F}\Z_F[S^{-1}]$.

We say that $\psi\colon K\to B$ is an $S$-optimal embedding with respect to $\O$ (and $M_2(\Z_F)$) if $\psi(K)\cap M_2(\Z_F[S^{-1}])=\psi(\O[S^{-1}])$. This is equivalent to $\psi(K_\p)\cap M_2(\Z_{F,\p})=\psi(\O_{\p})$ for every finite place $\p\not\in S$.

We denote by $opt(\O[S^{-1}])$ the collection of such optimal embeddings. The symbol $[\psi]$ denotes the $\PGL^+_2(\Z_F[S^{-1}])$-conjugacy class of $\psi$ and accordingly $[opt(\O[S^{-1}])]$ is the collection of all such classes.

Let $\KK^S=\prod_{\p\not\in S}\KK_\p$ be the subgroup of $\GG(\A^S)$ defined by $\KK_\p=\mathrm{PGL}_2(\Z_{F,\p})$. Since $F$ has class number $1$, there is a local-global principle for lattices in $F^2$ so $\GG(\A_{F,f})=\GG(F)\GG(\widehat{\Z}_F)$. In particular we have a decomposition $\GG(\A^S)=\GG(F)\GG(F_S)\KK^S$. Since $F$ has narrow class number $1$, for each $0\leq j\leq r$, there exists an element $\gamma_j\in \GG(F)$ such that $\sigma_j(\det(\gamma_j))<0$ and $\sigma_i(\det(\gamma_j))>0$ if $i\neq j$. This implies that we have an identification \begin{equation}\label{doublequot}\lrquot{\GG(F)}{\GG(\A_F)}{\KK^S}\cong\lquot{\mathrm{PGL}_2(\Z_F[S^{-1}])}{\PGL_2(F_S)}\cong \lquot{\PGL^+_2(\Z_F[S^{-1}])}{\PGL^+_2(F_{S})}.\end{equation} It is given by sending the class of $x\in \GG(\A_F)$ to the $\PGL^+_2(\Z_F[S^{-1}])$-class of $\gamma x_S$, where $\gamma\in\GG(F)$ is such that $\gamma x^S\in\KK^S$ and $\gamma x_\infty\in\PGL^+_2(\R)^{r+1}$.

Take $\psi\in opt(\O)$. Remember that $\psi$ induces another embedding $\psi\colon \TT_K\to\GG$. Let $\TT_K(\Z_F^S)\colonequals \psi^{-1}(\TT_\psi\cap \KK^S))$. Since $\KK^S$ is the projective image of $M_2(\Z_F^S)^\times$ in $\GG(\A_{F}^S)$, the optimality condition induces an embedding \begin{equation}\label{injopt}\lrquot{K^\times}{\A_{K}^\times}{{(\O^{S})}^\times}\cong\lrquot{\TT_K(F)}{\TT_K(\A_F)}{\TT_K({\Z_F^S})}\xrightarrow{\psi}\lrquot{\GG(\A)}{\GG(\A_F)}{\KK^S}.\end{equation}

Take $t\in \A_{K}^\times$. Let $\gamma \in \GG(F)$ such that $\gamma\psi(t^S,t_\infty)\in \KK^S\times\PGL^+_2(\R)^{r+1}$. We define $t\star\psi$ as $\gamma\psi\gamma^{-1}$. Denote by $Pic^+(\O[S^{-1}])$ the narrow class group of $\O[S^{-1}]$. We have an identification \[Pic^+(\O[S^{-1}])\cong \lrquot{K^\times}{\A_K^\times}{{(\O^S)}^\times K_{S_f}^\times (K_\infty^\times)^2 },\] where $S_f$ denote the finite places of $S$.

%Take $t\in \A_{K,f}^\times$. Denote by $t\cdot_\psi R$ the unique order of $B$ whose local component at $\p$ is $\psi(t_\p)M_2(\O_{F,\p})\psi(t_\p)^{-1}$. There exist $\gamma\in\GG(F)$ such that $t\cdot_\psi R=\gamma^{-1}R\gamma$. Remember the identification $Pic(\O)\cong\lrquot{K^\times}{\A_{K,f}}{\widehat{\O}^\times}$. Assume that $\a\in Pic(\O)$ is identified with the class of $t\in \A_{K,f}^\times$ under the previous isomorphism. We define $\a\star \psi=\gamma\psi\gamma^{-1}$.

\begin{proposition}\label{action} The operation $\star$ induces a well defined action of $Pic^+(\O[S^{-1}])$ on $[opt(\O[S^{-1}])]$. Moreover, this action is simple and transitive.
\end{proposition}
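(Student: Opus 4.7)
The plan is to establish well-definedness of $\star$, verify the group action axioms, and then prove transitivity and freeness separately. For well-definedness, existence of $\gamma$ follows from the decomposition $\GG(\A^S)\times\GG(F_\infty)=\GG(F)\cdot(\KK^S\times\PGL^+_2(F_\infty))$, a consequence of \eqref{doublequot} after projecting out the $S_f$-component. Two valid choices $\gamma_1,\gamma_2$ satisfy $\gamma_2\gamma_1^{-1}\in\GG(F)\cap(\KK^S\times\PGL^+_2(F_\infty))=\PGL^+_2(\Z_F[S^{-1}])$, so the embeddings $\gamma_i\psi\gamma_i^{-1}$ are $\PGL^+_2(\Z_F[S^{-1}])$-conjugate, making $t\star\psi$ well defined in $[opt(\O[S^{-1}])]$. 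To see $t\star\psi$ remains $S$-optimal I write $\gamma_\p=k_\p\psi(t_\p)^{-1}$ with $k_\p\in\KK_\p$ at each $\p\notin S$ and use commutativity of $K$: the conjugate order $\gamma_\p^{-1}M_2(\Z_{F,\p})\gamma_\p=\psi(t_\p)M_2(\Z_{F,\p})\psi(t_\p)^{-1}$ meets $\psi(K_\p)$ in $\psi(\O_\p)$. Descent to classes of $\psi$ (take $\gamma'=\gamma\delta_0^{-1}$) and of $t$ (take $\gamma'=\gamma\psi(\alpha)^{-1}$, noting $\psi(k^S)\in\KK^S$ by optimality and $\psi(k_\infty)\in\PGL^+_2(F_\infty)$ because norms of squares are totally positive) is routine, as is the telescoping identity $(t_1t_2)\star\psi=t_1\star(t_2\star\psi)$.

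For transitivity, Skolem--Noether applied to $M_2(F)$ produces, for $\psi,\psi'\in opt(\O[S^{-1}])$, an element $g\in\GG(F)$ with $\psi'=g\psi g^{-1}$. After absorbing the Skolem--Noether ambiguity $g\mapsto g\psi(\alpha)$ and the $\PGL^+_2(\Z_F[S^{-1}])$-freedom in the equivalence relation on $[opt(\O[S^{-1}])]$, finding $t$ with $t\star\psi\sim\psi'$ reduces to finding $u\in\A_K^\times$ and $\delta\in\PGL^+_2(\Z_F[S^{-1}])$ with $\delta g\psi(u^S,u_\infty)\in\KK^S\times\PGL^+_2(F_\infty)$. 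At each $\p\notin S$, the $S$-optimality of both $\psi$ and $\psi'$ forces $M_2(\Z_{F,\p})$ and $g_\p^{-1}M_2(\Z_{F,\p})g_\p$ to lie in the set of maximal orders of $M_2(F_\p)$ optimally containing $\psi(\O_\p)$; by the classical local structure of optimal embeddings this set forms a single $\psi(K_\p^\times)$-orbit, producing $u_\p\in K_\p^\times$ with $g_\p\psi(u_\p)\in\KK_\p$. Sign constraints at the archimedean places are then settled by choosing $u_\infty$ together with a suitable $\delta$.

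For freeness, assume $t\star\psi\sim\psi$, giving $\gamma=\delta\psi(\alpha)$ for some $\delta\in\PGL^+_2(\Z_F[S^{-1}])$ and $\alpha\in K^\times$. The defining condition then becomes $\psi(\alpha t^S)\in\KK^S$ and $\psi(\alpha t_\infty)\in\PGL^+_2(F_\infty)$. Optimality translates the first into $\alpha t_\p\in F_\p^\times\O_\p^\times$ for every $\p\notin S$; since the $S$-class group of $\Z_F$ is trivial we have $\widehat{F^S}^\times=F^\times\widehat{\Z_F^S}^\times$, so $\alpha t^S\in K^\times(\O^S)^\times$. The second condition only says that $N_{K_\infty/F_\infty}(\alpha t_\infty)$ is totally positive, which is a priori weaker than $\alpha t_\infty\in(K_\infty^\times)^2$. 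The main obstacle lies in closing this last gap, and I will invoke $h_F^+=1$: this provides units $u\in\Z_F^\times\subseteq(\O^S)^\times$ of any prescribed sign pattern at the real places of $F$, whose images at the split places of $K$ exhaust the quotient $(K_\infty^\times\cap N^{-1}(F_{\infty,+}^\times))/(K_\infty^\times)^2$. Replacing $\alpha$ by an appropriate $u\alpha$ then yields $t\in K^\times(\O^S)^\times K_{S_f}^\times(K_\infty^\times)^2$, so $[t]=1$ in $Pic^+(\O[S^{-1}])$.
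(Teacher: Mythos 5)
Your proof follows the same strategy and technical ingredients as the paper's: Skolem--Noether for transitivity, the local ``single $\psi(K_\p^\times)$-orbit'' fact for maximal orders optimally containing $\psi(\O_\p)$ (the paper cites Corollary 2.2 of \cite{paper1} at this point), and the narrow class number $1$ hypothesis. The one place where you genuinely go further is the freeness argument: you correctly identify that the archimedean condition only gives $\alpha t_\infty\in\prod_\sigma\pm(K_\sigma^\times)^2$, which is a priori coarser than $(K_\infty^\times)^2$, and you close the gap via the surjectivity of $\Z_F^\times\to\prod_\sigma\{\pm1\}$ supplied by $h_F^+=1$. The paper records the same containment and passes directly to the conclusion without spelling out this absorption, so your version makes the mechanism explicit.

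One remark on the transitivity step. You propose to settle the archimedean signs ``by choosing $u_\infty$ together with a suitable $\delta$,'' but every $\delta\in\PGL_2^+(\Z_F[S^{-1}])$ has totally positive determinant, so $\delta$ cannot alter any sign; only $u_\infty$ does real work. At a split place of $K$ the local norm is onto $\R^\times$, so $u_\sigma$ can flip the sign, matching the paper's ``$t_\infty$ can be constructed according to the signs of $\gamma$.'' At a complex place of $K$ (present exactly in the ATR application), the norm is positive-valued, so neither $u_\sigma$ nor $\delta$ can change $\mathrm{sign}(\sigma_0(\det g))$; that sign is an invariant of the Skolem--Noether coset of $g$ and is negative when $\psi'$ is the $K/F$-conjugate of $\psi$. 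This shows that the transitivity statement implicitly carries an orientation normalization at the complex place, an issue which the paper's proof elides in the same way, so it is a shared subtlety rather than a defect specific to your argument.
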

\begin{proof} The proof of this proposition is analogue to the proof of Lemma 4.2 and Proposition 4.3 in \cite{paper1}. The action is well defined since \eqref{injopt} is an embedding and the fact that conjugating $\psi$ by an element $\delta$ in $\PGL_2^+(\Z_F[S^{-1}])$ changes $\gamma$ by $\gamma\delta^{-1}$ since $\delta\in \KK^S\times \PGL_2^+(\R)^{r+1}$. Thus, after conjugating $\delta\psi\delta^{-1}$ by $\gamma\delta^{-1}$ we obtain $\gamma\psi \gamma^{-1}$ again. 

Since $\gamma\in \GG(F)=B^\times/F^\times$, the map $\gamma\psi \gamma^{-1}$ is indeed a new embedding that send $K\to B$. For the optimality it is enough to show that for $\p\notin S$ one has \[\gamma\psi(K_\p)\gamma^{-1}\cap M_2(\Z_{F,\p})=\gamma\psi(\O_\p)\gamma^{-1}.\] We know $\psi(K_p)\cap M_2(\Z_{F,\p})=\psi(\O_\p)$ but since $\mathrm{im}(\psi)$ is commutative, after conjugating by $\gamma\psi(t_\p)$ we have \[\gamma\psi(K_\p)\gamma^{-1}\cap (\gamma\psi(t_\p))M_2(\Z_{F,\p})(\gamma\psi(t_\p))^{-1}=\gamma\psi(\O_\p)\gamma^{-1}\] and now we use $(\gamma\psi(t_\p))M_2(\Z_{F,\p})(\gamma\psi(t_\p))^{-1}=M_2(\Z_{F,\p})$ since $\gamma\psi(t_\p)\in \PGL_2(\Z_{F,\p})$.

Consider elements $t$ and $s$ in $\A_K^\times$ and suppose that $\gamma,\delta\in \GG(F)$ are such that $\gamma\psi(s^S), \delta\gamma\psi\gamma^{-1}(t^S,t_\infty)\in\KK^S\times\PGL_2^+(\R)^{r+1}$. The equality $\delta\gamma\psi(t^Ss^S,t_\infty s_\infty)=\delta\gamma\psi(t^S,t_\infty)\gamma^{-1}\gamma\psi(s^S,s_\infty)\in\KK^S\times\PGL_2^+(\R)^{r+1}$ shows that $[ts\star \psi]=[t\star(s\star\psi)]$.

Now we show that the action is transitive. Let $\psi$ and $\psi'$ be two $S$-optimal embeddings for $\O$. As a consequence of Skolem-Noether there exists $\gamma\in \GG(F)$ such that $\psi'=\gamma\psi \gamma^{-1}$. To prove transitivity we need to show that there exists an adelic element $t$ such that $\gamma\psi(t^S,t_\infty)\in \KK^S\times\PGL_2^+(\R)^{r+1}$. Since both $\psi$ and $\psi'$ are $S$-optimal for $\O$, we have for $\p\notin S$ \begin{equation}\label{stab}\psi(\O_\p)=\psi(K_\p)\cap M_2(\Z_{F,\p})=\psi(K_\p)\cap \gamma^{-1}M_2(\Z_{F,\p})\gamma.\end{equation} This relation implies that the orders $M_2(\Z_{F,\p})$ and  $\gamma^{-1}M_2(\Z_{F,\p})\gamma$ (seen as element in the Bruhat-Tits tree of $\PGL_2(F_\p)$) belong to the same $\psi(K_\p)^\times$-orbit under conjugation. This is because Corollary 2.2 in \cite{paper1} and its proof are still valid when replacing $\Q_\ell$ by any local field and $\ell$ (in the computations of the proof) by its residue characteristic. In conclusion, there exists some element $t_\p$ in $K_\p^\times$ such that $\psi(t_\p)M_2(\Z_{F,\p})\psi(t_{\p})^{-1}=\gamma^{-1}M_2(\Z_{F,\p})\gamma$ and this is equivalent to $\gamma\psi(t_\p)\in\KK_\p$. Once we have $t\in \A_K^\times$ such that $\gamma\psi(t^S)\in\KK^S$, the component $t_\infty$ can be easily be constructed according to the signs of $\gamma$ to ensure that $\gamma\psi(t^S,t_\infty)\in \KK^S\times\PGL_2^+(\R)^{r+1}$.

Finally, we give the proof of the simplicity of the action. Assume that $t$ acts trivially over $[\psi]$. There exists some $\delta\in PGL_2^+(\Z_F[S^{-1}])$ such that $\gamma\psi\gamma^{-1}=\delta\psi\delta^{-1}$, then there exists some $s\in K^\times$ such that $\delta^{-1}\gamma=\psi(s)$. Therefore $\psi((st)^S,(st)_\infty)\in \KK^S\times\PGL_2^+(\R)^{r+1}$ and this says that $st\in{(\O^S)}^\times K^\times_{S_f}\prod_{\sigma\colon F\to\R}\pm(K_\sigma^\times)^2$. Now this shows that $t$ is trivial on $\lrquot{K^\times}{\A_K^\times}{{(\O^S)}^\times K_{S_f}^\times (K_\infty^\times)^2 }$. 
\end{proof}

\section{Equidistribution of ATR cycles}\label{ATR}

\subsection{Parametrization of ATR cycles}\label{ATRcyclespar}

Let $K/F$ be an ATR extension and $\O$ an order in $K$. Take $\psi\in opt(\O)$ and consider $\Delta_\psi$ its associated ATR cycle as in the Introduction. If $\tau_0\in\H_0$ and $\mathcal{Y}_j\subseteq\H_j$ ($1\leq j\leq r$) denotes the unique point and geodesics stable under $\psi(K)^\times$ respectively, $\Delta_\psi$ is the projection of $\{\tau_0\}\times\prod_j\mathcal{Y}_j$ in the Hilbert modular variety $\PSL_2(\Z_F)\backslash\H^{r+1}$. Observe that $\Delta_\psi$ depends actually only on $[\psi]$.

The group $\psi(K_{\sigma_j}^\times)$ acts transitively on $\mathcal{Y}_j $. Denote by $\Gamma_{x}$ the stabilizer in $\PSL_2(\Z_F)$ of $x$ in some $\H_j$ or in $\H^{r+1}$. Note that $\Gamma_{\tau_0}=\psi(K_{\sigma_0})^\times\cap \PSL_2(\Z_F)$. Since $\psi$ is optimal with respect to $\O$, we obtain that $\Gamma_{\tau_0}=\psi(\O_{+}^\times)/{\Z_F}^{\times}$. By Dirichlet's unit theorem, $\Gamma_{\tau_0}$ has rank $(2r+1)-1-r=r$.

Note that we obtain a uniformization \[\Delta_\psi\cong\{\tau_0\}\times\lquot{\Gamma_{\tau_0}}{\prod_{j=1}^r\mathcal{Y}_j}\cong \lrquot{\O_{+}^\times}{K_\infty^\times}{{F_\infty^\times}},\] which shows that $\Delta_\psi$ is compact. Use a Haar measure in $K_\infty^\times$ to endow $\lrquot{\O_{+}^\times}{K_\infty^\times}{{F_\infty^\times}}$ with a finite measure. The measure $\nu_\psi$ from the introduction is the push-forward of this measure to $\Delta_\psi$. By Proposition \ref{action} with $S$ equal to the infinite places, there are exactly $h_+(\O)=Pic^+(\O)$ ATR cycles of conductor $\O$ in  $\PSL_2(\Z_F)\backslash \H^{r+1}$. %As in the introduction, $\mu_\O$ denotes the unique probability measure proportional to $\sum_{[\psi]}\nu_{\psi}$.

%\begin{theorem}\label{equidistributionATR'} The collection of $h_+(\Z_F)$ ATR cycles attached to $\Z_F$ becomes equidistributed on $\Gamma\backslash \H^{r+1}$ as long as $|N_{F/\Q}(d_K)|$ or the conductor $f$ goes to $\infty$. In other words, the measures $\mu_{\O}$ converges weak-* to the mesaure $\mu$.
%\end{theorem}

\subsection{Equidistribution}\label{equidATR}

Remember that $\PGL_2^+(\R)$ can be identified with $T^1\H$, the unit tangent bundle of $\H$. This is done by defining the action $g(z,\zeta)\colonequals(gz,Dg(z)\zeta)$ for any $(z,\zeta)\in \H\times T_z^1\H$, where $Dg(z)=(cz+d)^{-2}$ if $g=\begin{pmatrix}a&b\\c&d\end{pmatrix}$. Each geodesic in $\H$ can be lifted into an orbit under the geodesic flow in $T^1\H$. Under the previous identification this correspond to a right $A^+(\R)$-orbit where $A$ denote the diagonal subgroup of $\PGL_2$.

Let $\KK=\KK_f\times\prod_{\sigma_j}\KK_j$ be the subgroup of $\GG(\A_F)$ defined by $\KK_f=\mathrm{PGL}_2(\widehat{\Z_F})$, $K_0=\mathrm{PSO}_2(\R)$ and $K_j$ trivial for $1\leq j\leq r$.

As in \eqref{doublequot}, we have an identification \[\lrquot{\GG(F)}{\GG(\A_F)}{\KK}\cong\lquot{\mathrm{PSL}_2(\Z_F)}{\left(\H_0\times\prod_{j=1}^rT^1\H_j\right)}\]

For $\psi\in opt(\O)$, we lift the ATR cycle attached to it as closed $A^+(\R)^r$-orbit in the following fashion. By the Skolem-Noether theorem, there exists $g^\psi\in\GG(F_\infty)$ such that $g^\psi_{\sigma_0}\PSO_2(\R){g_{\sigma_0}^\psi}^{-1}=\psi_0(K_{\sigma_0})^\times$ and $g^\psi_{\sigma_j}A(\R){g_{\sigma_j}^\psi}^{-1}=\psi_0(K_{\sigma_j})^\times$. The right orbits $g^\psi_{\sigma_0}\PSO_2(\R)$ and $g^\psi_{\sigma_j}A^+(\R)$ are fixed if we normalize the choice of $g^\psi$ to satisfy $g^\psi_{\sigma_0}\in\PGL_2^+(\R)$ (implying that $g^\psi_{\sigma_0}\cdot i=\tau_0$) and that $g^\psi_{\sigma_j}\cdot\infty=\sigma_j(\tau_0)$. If this is the case, the $\PSL_2(\Z_F)$-orbit of $(g^\psi_{\sigma_0}\PSO_2(\R),g^\psi_{\sigma_1}A^+(\R),...,g^\psi_{\sigma_r}A^+(\R))\subseteq \PGL_2^+(\R)^{r+1}$ projects onto the ATR cycle $\Delta_\psi$.

\begin{proposition}\label{propA} Fix $\psi_0\in opt(\O)$. The projection of the homogeneous toral set $[\TT_{\psi_0}g]$ to $[\GG]_{\KK}$ is the collection of $h^+(\O)$ lifts of the ATR cycles.
\end{proposition}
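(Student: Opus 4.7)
The plan is to trace the image of $[\TT_{\psi_0}g]$ through the identification
\[
\lrquot{\GG(F)}{\GG(\A_F)}{\KK}\cong\lquot{\mathrm{PSL}_2(\Z_F)}{\bigl(\H_0\times\textstyle\prod_{j=1}^{r}T^1\H_j\bigr)}
\]
(which is \eqref{doublequot} extended to incorporate the $\mathrm{PSO}_2(\R)$-factor at $\sigma_0$) and recognize it as a disjoint union of normalized lifts of ATR cycles indexed by $Pic^+(\O)$. I may take $g$ with $g^{S_\infty}=e$ and $g_\infty=g^{\psi_0}$, the element fixed just before the statement of the proposition; a general $g$ translates the whole picture by the corresponding adelic element and the argument is identical.

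At each finite prime $\p$, optimality of $\psi_0$ gives $\TT_{\psi_0}(F_\p)\cap\KK_\p=\psi_0(\O_\p^\times/\Z_{F,\p}^\times)$, so the double-coset space parametrizing the image on the finite side is
\[
\lrquot{\TT_K(F)}{\TT_K(\A_{F,f})}{\psi_0^{-1}\bigl(\TT_{\psi_0}(\A_{F,f})\cap\KK_f\bigr)\cdot\TT_K(F_\infty)^+}\;\cong\;Pic^+(\O).
\]
On the infinite side, $\psi_0(t_{\sigma_0})$ fixes $\tau_0\in\H_0$, so the $\PSO_2(\R)$-quotient collapses the $\sigma_0$-component of the orbit to the single point $\tau_0$; for $1\le j\le r$, the element $\psi_0(t_{\sigma_j})g^{\psi_0}_{\sigma_j}$ lies in $g^{\psi_0}_{\sigma_j}A(\R)$, whose $A^+(\R)$-orbit in $T^1\H_j$ is precisely the normalized lift of the geodesic $\mathcal{Y}_j$.

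To match each class in $Pic^+(\O)$ with a specific ATR cycle, I would pick for a representative $t\in\A_K^\times$ an element $\gamma\in\GG(F)$ with $\gamma\psi_0(t^{S_\infty})\in\KK^{S_\infty}$ and $\gamma\psi_0(t_\infty)\in\PGL_2^+(\R)^{r+1}$, exactly as in the construction of the $\star$-action in Proposition \ref{action}. Then $[\psi_0(t)g]$ maps under the identification to the normalized lift attached to the optimal embedding $t\star\psi_0=\gamma\psi_0\gamma^{-1}$, which by definition projects to the ATR cycle $\Delta_{t\star\psi_0}$ in the Hilbert modular variety. Since $\star$ is simply transitive on $[opt(\O)]$, the $h^+(\O)$ classes of $Pic^+(\O)$ yield the $h^+(\O)$ distinct ATR cycles of conductor $\O$, each with a unique lift appearing in the projection of $[\TT_{\psi_0}g]$.

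The delicate point will be the sign bookkeeping at the archimedean places: the identification requires the infinite components of representatives to lie in $\PGL_2^+(\R)^{r+1}$, which forces one to absorb sign flips at the $\sigma_j$ into left-multiplication by $\GG(F)$ through the elements $\gamma_j$ introduced in Section \ref{embeddings} (this is precisely why $Pic^+(\O)$ rather than $Pic(\O)$ governs the parametrization). One must check that this sign-absorption respects the $A^+(\R)^r$-orbit structure, so that the lift one obtains coincides with the normalized lift defined in Section \ref{equidATR}, rather than a shifted version of it. Once this is verified the parametrization and counting of lifts follow formally from Proposition \ref{action}.
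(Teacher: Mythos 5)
Your proposal tracks the paper's proof essentially step for step: decompose $\A_K^\times$ into $Pic^+(\O)$-double cosets, choose $\gamma\in\GG(F)$ with $\gamma\psi_0(t^{S_\infty})\in\KK^{S_\infty}$ and $\gamma\psi_0(t_\infty)\in\PGL_2^+(\R)^{r+1}$ to identify each coset with an element of $[opt(\O)]$ via the $\star$-action of Proposition \ref{action}, and read off the resulting $\PSO_2\times (A^+)^r$-orbits as the normalized lifts. The sign verification you flag as a ``delicate point'' is in fact handled exactly the way you suggest, and the paper closes it quickly: because $K_{\sigma_0}\cong\C$, $\psi_0(t_{\sigma_0})$ is automatically in $\PGL_2^+(\R)$, which forces $\gamma_i$ to have positive sign at $\sigma_0$, so $\gamma_i g^{\psi_0}_{\sigma_0}\cdot i$ is the fixed point of $\gamma_i\psi_0\gamma_i^{-1}$ in $\H_0$; the endpoint conditions $\gamma_ig^{\psi_0}_{\sigma_j}\cdot\infty=\sigma_j(\tau_0)$ at the remaining places then follow at once, so the orbits obtained really are the normalized lifts of Section \ref{equidATR} and not shifted versions.
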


\begin{proof} Denote $h^+=h^+(\O)$ and let  $t_1,...,t_{h^+}\in \A_K^\times$ be adelic representatives $Pic^+(\O)$. That is, we have \[\A_K^\times=\bigsqcup_{i=1}^{h^+}K^\times t_i\widehat{\O}^\times(K_\infty^\times)^2.\] The image of $\TT_{\psi_0}$ in $[\GG]_{\KK}$ is formed by the $\GG(F)$-orbits of the sets \[\psi_0(t_{i,f})\KK_f\times \psi_0(t_{i,\infty}(K_\infty^\times)^2)\KK_0.\] Under the identification \eqref{doublequot}, in $\lquot{\PSL_2(\Z_F)}{\PGL_2^+(\R)}^{r+1}$ they correspond to the $\PSL_2(\Z_F)$-orbits of the sets $\gamma_i\psi(t_{i,\infty}(K_\infty^\times)^2)$, where $\gamma_i\psi_0(t_i)\in \KK_f\PGL_2^+(\R)^{r+1}$.  Now we apply the right translation by $g^\psi$, and obtain that the  image of $[\TT_{\psi_0}g^\psi]$ in the quotient space $\lquot{\PSL_2(\Z_F)}{\PGL_2^+(\R)}^{r+1}$ is composed of the $\PSL_2(\Z_F)$-orbits of the sets \[\gamma_i\psi_0\gamma_i^{-1}((K_\infty^\times))^2\gamma_i\psi_0(t_{i,\infty})g^\psi.\] By Proposition \ref{action}, the $\gamma_i\psi\gamma_i$ formed a complete set of representatives for $[opt(\O)]$. Since $K_{\sigma_0}\cong\C$, $\psi_0(t_{\sigma_0})\in \PGL_2^+(\R)$ and then $\gamma$ has positive sign at $\sigma_0$ implying that $\gamma_ig^\psi_{\sigma_0}\cdot i$ is the unique fixed point of $\gamma_i\psi_0\gamma_i^{-1}$ in $\H_0$. The condition at $1\leq j\leq r$ for $\gamma_ig^\psi_{\sigma_j}\cdot \infty$ follows immediately. This finishes the proof of the first statement because then, \[\gamma_i\psi_0\gamma_i^{-1}((K_\infty^\times))^2\gamma_i\psi_0(t_{i,\infty})g^\psi=(g_{\sigma_0}^{\gamma_i\psi_0\gamma_i^{-1}}\mathrm{PSL}_2(\R),g_{\sigma_1}^{\gamma_i\psi_0\gamma_i^{-1}}A^+(\R),...,g_{\sigma_r}^{\gamma_i\psi_0\gamma_i^{-1}}A^+(\R)).\]
\end{proof}

\begin{proof}[Proof of Theorem \ref{equidistributionATR}]
In view of Proposition \ref{propA}, it is enough to apply Theorem \ref{equidistribution} to the collection of tori $\{Y_\O=[\TT_{\psi_0}g]\}_{\O}$. We need to observe that our compact $\KK$ satisfies $S(\KK)=\emptyset$ so there is equidistribution as long as we prove that the discriminant of $\TT_{\psi}$ is $|N_{F/\Q}(\mathfrak{d}_{\O/\Z_F})|$. This follows from Proposition \ref{discriminant} since $g^{\psi_0}_\p$ is trivial for every $\p$ and therefore $\O_{Y_\O}=\O$. \end{proof}

\section{Equidistribution of Stark-Heegner cycles}\label{StarkH}
\subsection{Paramatrization of Stark-Heegner cycles} Let $K/\Q$ be a real quadratic extension inert at $p$ and $\O$ an order in $K$. Take $\psi\in opt(\O[1/p])$ and consider $\Delta_\psi$ its associated Stark-Heegner cycle as in the Introduction. Observe that $\Delta_\psi$ depends only on $[\psi]$.

Let $S=\{p,\infty\}$. The group $\psi(K_\infty^\times)$ acts transitively on $\mathcal{Y}_\psi$. Denote by $\Gamma_\tau$ the stabilizer of $\tau$ in $\PGL_2^+(\Z[1/p])$. Note that $\Gamma_{\tau_\psi}=\psi(K)^\times\cap\PGL^+_2(\Z[1/p])$. Since $\psi$ is $S$-optimal with respect to $\O$, we obtain that $\Gamma_{\tau_\psi}=\psi(\O[S^{-1}]^\times)/\pm p^\Z$.

We have the following uniformization \begin{equation}\label{unifSH}\Delta_\psi=\Gamma_{\tau_\psi}\backslash\mathcal{Y}_\psi\times \{\tau_\psi\}=\lrquot{\O[S^{-1}]^\times_+}{K_S^\times}{\Q_S^\times}\end{equation} which shows that $\Delta_\psi$ is compact by Dirichlet's unit Theorem for $S$-units. Use a Haar measure in $K_S^\times$ to endow $\lrquot{\O[S^{-1}]^\times_+}{K_S^\times}{\Q_S^\times}$ with a finite measure. The measure $\nu_\psi$ from the Introduction is the push-forward of this measure to $\Delta_\psi$ under \eqref{unifSH}. By Proposition \ref{action}, there are exactly $h_+(\O[1/p])=\# Pic^+(\O[1/p])$ Stark-Heegner cycles of conductor $\O$ in $\PGL^+_2(\Z[1/p])\backslash(\H\times\H_{p^2}^{unr})$.

\subsection{Equidistribution} Fix $u\in \Z_{p}^\times$ a non-square. Now fix $\alpha\in \Z_{p^2}$ a square-root of $u$. We have $\H_{p^2}^{unr}=\Q_p+\alpha\Q_p^\times$ and so $\PGL_2(\Q_p)$ acts transitively on $\H_{p^2}^{unr}$ as it is shown by the equality $\begin{pmatrix}y&x\\0&1\end{pmatrix}\cdot \alpha=x+y\alpha$. We denote by $\KK_p$ the stabilizer of $\alpha$ in $\PGL_2(\Q_p)$. Let $\mathcal{K}$ denote the quadratic subalgebra of $M_2(\Q_p)$ generated by $\begin{pmatrix}0&u\\1&0\end{pmatrix}$ over $\Q_p$. Then $\mathcal{K}$ is isomorphic to $\Q_{p^2}$ and $\KK_p$ is isomorphic to $\mathcal{K}^\times/\Q_p^\times$. This shows that $\KK_p$ is compact since it is homeomorphic to $\P^1(\Q_p)$. When compared to the case $S=\{\infty\}$, it plays the role of the orthogonal group $\mathrm{PSO}_2(\R)$.

Let $\KK=\KK^S\times\KK_p$ be the compact subgroup of $\GG(\A_{\Q,f})$ defined by $\KK^S=\PGL_2(\Z^S)$. As in \eqref{doublequot}, we have an identification \[\lrquot{\GG(F)}{\GG(\A_F)}{\KK}\cong\lquot{\mathrm{PGL}^+_2(\Z[1/p])}{(T^1\H\times\H_{p^2}^{unr})}.\]

As we did in section \ref{equidATR} with the geodesic component of an ATR cycle, for $\psi$ in $opt(\O[1/p])$, we lift the associated Stark-Heegner cycle as an $A^+(\R)$-orbit. Indeed, let $g^\psi\in\PGL_2^+(\Q_S)$ be such that $g^\psi_p\KK_p {g^\psi_p}^{-1}=\psi(K_p)^\times$ and $g^\psi_\infty A(\R){g^\psi_{\infty}}^{-1}=\psi(K_\infty)^\times$. The existence of this element in guaranteed by the Skolem-Noether Theorem. The orbits $g^\psi_p\KK_p=\psi(K_p)^\times g^\psi_p$ and $g^\psi_\infty A^+(\R)=\psi(K_\infty^\times)g^\psi_\infty$ are fixed if we normalize the choice of $g^\psi$ to satisfy $g^\psi_\infty\cdot\infty=\tau_\psi$ in $\R$ and $g^\psi_p\cdot\alpha=\tau_\psi$ in $\H_{p^2}^{unr}$. These choices imply that the $\PGL_2^+(\Z[1/p])$-orbit of $(g^\psi_\infty A^+(\R),g^\psi_p\KK_p)\subseteq \PGL_2(\Q_S)$ projects onto the Stark-Heegner cycle $\Delta_\psi$.

\begin{proposition} Fix $\psi_0\in opt(\O[1/p])$. The projection of the homogeneous toral set $[\TT_{\psi_0}g]$ to $[\GG]_{\KK}$ is the collection of $h^+(\O)$ lifts of the ATR cycles.\end{proposition}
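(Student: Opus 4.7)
The plan is to follow the proof of Proposition \ref{propA} verbatim, with $F = \Q$ and $S = \{p, \infty\}$, swapping roles so that the non-archimedean place $p$ (where $K_p \cong \Q_{p^2}$) plays the part of the complex archimedean place $\sigma_0$ (where $K_{\sigma_0} \cong \C$) in the ATR case. Concretely, $\KK_p$ and $\alpha \in \H_{p^2}^{unr}$ replace $\PSO_2(\R)$ and $i \in \H$, while the unique archimedean place of $\Q$ plays the role of the split places $\sigma_j$ ($1 \leq j \leq r$), contributing the geodesic factor.

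First, I would choose adelic representatives $t_1, \dots, t_{h^+} \in \A_K^\times$ of $Pic^+(\O[1/p])$ giving
\[ \A_K^\times = \bigsqcup_{i=1}^{h^+} K^\times t_i (\O^S)^\times K_p^\times (K_\infty^\times)^2, \]
so that the image of $\TT_{\psi_0}$ in $[\GG]_\KK$ is the union of the $\GG(\Q)$-orbits of
\[ \psi_0(t_i^S)\KK^S \,\times\, \psi_0(t_{i,\infty})(K_\infty^\times)^2 \,\times\, \psi_0(t_{i,p}) K_p^\times \KK_p. \]
Via identification \eqref{doublequot}, pick $\gamma_i \in \PGL_2^+(\Q)$ with $\gamma_i \psi_0(t_i^S) \in \KK^S$ and $\gamma_i \psi_0(t_{i,S}) \in \PGL_2^+(\Q_S)$. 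After right-translating by $g^{\psi_0}$, the image of $[\TT_{\psi_0} g^{\psi_0}]$ in $\PGL_2^+(\Z[1/p]) \backslash (T^1\H \times \H_{p^2}^{unr})$ becomes the union of the $\PGL_2^+(\Z[1/p])$-orbits of
\[ \bigl(\gamma_i \psi_0 \gamma_i^{-1}(K_\infty^\times)^2 \gamma_i \psi_0(t_{i,\infty}) g_\infty^{\psi_0},\;\; \gamma_i \psi_0 \gamma_i^{-1}(K_p^\times) \gamma_i \psi_0(t_{i,p}) g_p^{\psi_0} \KK_p \bigr). \]

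Second, by Proposition \ref{action}, the embeddings $\phi_i := \gamma_i \psi_0 \gamma_i^{-1}$ exhaust $[opt(\O[1/p])]$. I would then recognize $\gamma_i g^{\psi_0}$ as the normalized element $g^{\phi_i}$ defined above the statement: at $p$, the intertwining $\gamma_i g_p^{\psi_0} \KK_p (\gamma_i g_p^{\psi_0})^{-1} = \phi_i(K_p^\times)$ follows from $\KK_p = \mathrm{Stab}(\alpha)$ together with $g_p^{\psi_0} \cdot \alpha = \tau_{\psi_0}$, and the normalization $\gamma_i g_p^{\psi_0} \cdot \alpha = \tau_{\phi_i}$ is naturality of fixed points. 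The archimedean check is identical, with $A^+(\R)$ in place of $\KK_p$ and with positivity of $\gamma_i$ at $\infty$ ensuring the correct choice of fixed point on $\partial \H = \P^1(\R)$. Thus each orbit projects onto the Stark-Heegner cycle $\Delta_{\phi_i}$, as desired.

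The main obstacle is verifying the naturality $\gamma_i \tau_{\psi_0} = \tau_{\phi_i}$, i.e. that conjugation selects the correct fixed point rather than its Galois conjugate $\tau'_{\phi_i}$. This reduces to the defining convention that $\psi(x)$ acts on the column $(\tau_\psi, 1)^t$ by multiplication by $x$: this property is preserved under $\PGL_2(\Q)$-conjugation, since $\gamma_i$ maps $(\tau_{\psi_0}, 1)^t$ to a scalar multiple of $(\gamma_i \tau_{\psi_0}, 1)^t$ while simultaneously turning $\psi_0$ into $\phi_i$, forcing $\gamma_i \tau_{\psi_0} = \tau_{\phi_i}$ in the prescribed convention.
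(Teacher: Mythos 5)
Your proposal is correct and follows essentially the same approach as the paper's proof, transcribing Proposition \ref{propA} with $F=\Q$, $S=\{p,\infty\}$, and $\KK_p$ in the role of $\PSO_2(\R)$; your closing remark on naturality of $\tau_\psi$ under conjugation is exactly the point the paper dispatches with ``the conditions on the action over $\infty$ and $\alpha$ are satisfied immediately.'' One small slip: you ``pick $\gamma_i\in\PGL_2^+(\Q)$,'' but the identification \eqref{doublequot} only gives $\gamma_i\in\GG(\Q)$ with $\gamma_i\psi_0(t_i)\in\KK^S\times\PGL_2^+(\Q_S)$, so $\gamma_i$ has the opposite sign of $\det\psi_0(t_{i,\infty})=N_{K/\Q}(t_{i,\infty})$ at $\infty$, which may well be negative; correspondingly the paper takes $g^{\phi_i}=\gamma_i\psi_0(t_{i,S})g^{\psi_0}$ rather than your $\gamma_i g^{\psi_0}$ (the two agree modulo $\KK_p$ at $p$ and modulo $A(\R)$, not necessarily $A^+(\R)$, at $\infty$), but since either $A^+(\R)$-orbit projects to the same Stark-Heegner cycle $\Delta_{\phi_i}$, the conclusion is unaffected.
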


\begin{proof} Let $h^+=h^+(\O[1/p])$ and let $t_1,...,t_{h^+}\in\A_K^\times$ be adelic representatives of $Pic^+(\O[1/p])$, meaning that we have \[\A_K^\times=\bigsqcup_{i=1}^{h^+}K^\times t_i(\O^S)^\times K_p^\times(K_\infty^\times)^2.\] The image of $\TT_{\psi_0}$ in $[\GG]_{\KK}$ is formed by the $\GG(F)$-orbits of the sets \[\psi_0(t_i^S)\KK^S\times\psi_0(t_{i,S}K_p^\times(K_\infty^\times)^2\KK_p.\] Under the identification \eqref{doublequot}, in $\PGL_2^+(\Z[1/p])\backslash \PGL_2^+(\Q_S)$ they correspond to the $\PGL_2^+(\Z[1/p])$-orbits of the sets $\gamma_i\psi_0(t_{i,S}K_p^\times(K_\infty^\times)^2)$, where $\gamma_i\psi_0(t_i)\in \KK^S\PGL_2^+(\Q_S)$. Applying the right translation by $g^{\psi_0}$, we obtain that the image of $[\TT_{\psi_0}g^{\psi_0}]$ in the space $\lquot{\PGL_2^+(\Z[1/p])}{\PGL^+_2(\Q_S)}$ is composed of the $\PGL^+_2(\Z[1/p])$-orbits of the sets \[(\gamma_i\psi_0\gamma_i^{-1})(K_p^\times (K_\infty^\times)^2)\gamma_i\psi_0(t_{i,S})g^{\psi_0}=(\gamma_i\psi_0(t_{i,\infty})g^{\psi_0}_\infty A^+(\R),\gamma_i\psi_0(t_{i,p})g^{\psi_0}_p\KK_p).\] By Proposition \ref{action}, the embeddings $\gamma_i\psi_0\gamma_i^{-1}$ form a complete set of representatives for $[opt(\O[1/p])]$ and since $\gamma_i\psi_0(t_{i\infty})\in\PGL_2^+(\R)$ we have $g^{\gamma_i\psi_0\gamma_i^{-1}}=\gamma_i\psi(t_{i,S})g^{\psi_0}$ (the conditions on the action over $\infty$ and $\alpha$ are satisfied immediately). This finishes the proof about the projection.
\end{proof}

\begin{proof}[Proof of Theorem \ref{equidistributionSH}]
We apply Theorem \ref{equidistribution} to the collection of tori $\{Y_{\O}=[\TT_{\psi_0}g]\}_\O$ with $R=M_2(\Z)$. Since the norm is surjective over the units group of unramified extensions, we have $\mathrm{nrd}(\KK_p)={\Z}_p^\times$ so that $S(\KK)=\emptyset$. There exists a unique integer $f$ coprime to $p$ such that $\O[1/p]=\O_f[1/p]$ with notation as in section \ref{orders}. Since $g^{\psi_0}_q=1$ for every prime $q\neq p$ we have that $\O_{Y_\O}=\O_{fp^n}$ for some $n\geq0$. From this we conclude that $({g^{\psi_0}_p}^{-1}\psi g_p^{\psi_0})(K_p)^\times\cap \PGL_2(\Z_p)$ is the projective image of the group of $n$-th principal units of $\mathcal{K}$. On the other hand, $({g_p^{\psi_0}}^{-1}\psi_0 g_p^{\psi_0})(K_p^\times)=\KK_p\subseteq\PGL_2(\Z_p)$ and $\mathcal{K}^\times/\Q_p^\times\cong \O_\mathcal{K}^\times/\Z_p^\times$, so $n=0$. Since $K$ is inert at $p$, the discriminant $d_K$ of $\O_K$ is coprime to $p$ and so $disc_p(\O)=d_Kf^2=disc(\O_Y)$. Using Proposition \ref{discriminant} we conclude that there is equidistribution as long as $disc_p(\O)$ goes to $\infty$.
\end{proof}

\selectlanguage{english}
\bibliographystyle{alpha}
\bibliography{ATRbiblio}

%\begin{thebibliography}{A}

%\bibitem[HMRL]{HMRL} S. Herrero, R. Menares \& J. Rivera-Letelier. \textit{$p$-adic distribution of CM points and Hecke orbits I: Convergence towards the Gauss point}. 2021.

%\bibitem[HMRL2]{HMRL2} S. Herrero, R. Menares \& J. Rivera-Letelier. \textit{$p$-adic distribution of CM points and Hecke orbits II: Linnik equidistribution on the supersingular locus}. 2021.

%\bibitem[ELMV2]{ELMV2} M. Einsiedler, E. Lindenstrauss, P. Michel \& A. Venkatesh. \textit{The distribution of closed geodesic on the modular surface, and Duke's theorem}. 2012.

%\bibitem[ALMW]{ALMW} M. Aka, M. Luethi, P. Michel \& A. Wieser. \textit{Simultaneous supersingular reductions of CM elliptic curves}. 2022. 

%\bibitem[BDIS]{BDIS} M. Bertolini, H. Darmon, A. Iovita \& M. Spiess. \textit{Teitelbaum's exceptional zero conjecture in the anticyclotomic setting}. 2002.

%\bibitem[BD]{BD} M. Bertolini \& H. Darmon. \textit{$p$-adic periods, $p$-adic $L$-functions and the $p$-adic uniformization of Shimura curves}. 1999.

%\bibitem[D]{D} H. Darmon. \textit{Integration on $\mathcal{H}_p\times \mathcal{H}$ and arithmetic applications}. 2001.

%\bibitem[DV]{DV} H. Darmon \& J. Vonk. \textit{Singular moduli for real quadratic fields: A rigid analytic approach}. 2021.

%\bibitem[K]{K} I. Khayutin. \textit{Joint Equidistribution of CM points}. 2019.

%\bibitem[EW]{EW} M. Einsiedler \& T. Ward.  \textit{Ergodic Theory with a view towards Number Theory}. 2010.

%\bibitem[N]{N} A. Nordentoft. \textit{Concentration of closed geodesics in the homology of modular curves}. 2022 (preprint).

%\end{thebibliography}

\end{document}